\newtheorem{lem}{Lemma}
\newtheorem{thm}{Theorem}
\newtheorem{cor}{Corollary}
\newtheorem{example}{Example}
\newtheorem{rem}{Remark}
\renewcommand{\P}{\mathbb P}
\newcommand{\Z}{\mathbb Z}
\newcommand{\C}{\mathbb C}
\newcommand{\D}{\mathbb D}
\newcommand{\R}{\mathbb R}
\renewcommand{\H}{\mathbb H}
\newcommand{\eps}{\epsilon}
\DeclareMathOperator{\norm}{N}
\newcommand{\SO}[1][3]{\mathrm{SO}_{#1}}
\newcommand{\SE}[1][3]{\mathrm{SE}_{#1}}
\newcommand{\ci}{\mathrm{i}}
\newcommand{\qi}{\mathbf{i}}
\title{The Theory of Bonds II: \\ Closed 6R Linkages with Maximal Genus}
\author{G\'abor Heged\"us, Obuda University, Budapest, Hungary \\
	Zijia Li, RICAM, Linz, Austria \\
	Josef Schicho, RICAM, Linz, Austria \\
	Hans-Peter Schr\"ocker, University of Innsbruck, Austria }
\begin{document}

\maketitle

\begin{abstract}
In this paper, we introduce a method that allows to produce necessary conditions
on the Denavit--Hartenberg parameters for the mobility of a closed linkage
with six rotational joints. We use it to prove that the genus of the configuration curve of
a such a linkage is at most five, and to give a complete classification of the linkages 
with a configuration curve of degree four or five. The classification contains new families.
\end{abstract}

\section*{Introduction}

A linkage is a mechanism composed of a finite number of rigid bodies,
called links, and connections between them, called joints. The links
move in three-dimensional space, and when two links are connected by
a joint, then the relative motion is constrained to a certain subgroup
of the group of Euclidean displacements, depending on the joint.
For instance, a revolute joint ensures that the relative motion
is always a rotation around a fixed axis. A linkage consisting of
$n$ links that are cyclically connected by $n$ revolute joints
is called a closed $n$R linkage.

In kinematics, one studies the set of all possible configurations
of a linkage. If the configuration set has positive dimension,
then the linkage is mobile. This is always the case for $n$R linkages
when $n\ge 7$. There are mobile closed $n$R linkages for $n=4,5,6$.
A mobile closed 4R linkage is either planar, or spherical, or 
a Bennett linkage~\cite{bennett14}. For 5R linkages, we have
a similar classification that has been completed by \cite{Karger}.
The classification of mobile closed 6R linkages is still an open
problem.

The theory of bonds was introduced in \cite{hegedus13b} as a 
method for the analysis of linkages with revolute joints. The
configuration curve of such a linkage can be described by algebraic
equations. Intuitively, bonds are points in the configuration curve
with complex coefficients where something degenerate happens. For a
typical bond of a closed $n$R linkage, there are exactly two joints with
degenerate rotation angles.
In this way, the bond ``connects'' the two links. 

%

The theory of bonds has been used in \cite{hegedus13b} to give an almost computation-free
proof of the classification of closed 5R linkages. The original proof \cite{Karger}
is based on complex computations done with computer algebra. In \cite{nawratil}, bonds are
used for studying Stewart-Gough platforms with self-motions.

The main result in this paper is the classification of all 6R linkages with a configuration
curve of maximal genus. In Section~\ref{sec:bd}, we show that the maximum is 5 (examples of genus 5 are well-known,
but the fact that 5 is an upper bound is new). In Section~\ref{sec:max}, we give a classification of all
linkages with configuration curve of genus 5 in terms of their Denavit--Hartenberg parameters.
It turns out that they come in 4 families, two are well-known and two are new.

For the reader who is more interested in computation, the most interesting section is Section~\ref{sec:quad},
which is logically independent of Section~\ref{sec:bd}. Here we introduce a technique which allows
to produce polynomials in the Denavit--Hartenberg parameters such that their vanishing is necessary for the existence
of bonds.

It is apparent that the results of bond theory have their main interest in the field
of kinematics. However, we also want to address algebraists and geometers, because
we hope to serve as an inspiration to use this technique and develop similar ones 
for solving more questions of interest in kinematics.

\section{Preliminary Definitions}
\label{sec:linkages}

In this section we recall several classical concepts and definitions that we need later:
linkages and their configuration set and coupler maps, the Study quadric, and dual quaternions.

We denote by $\SE$ the group of Euclidean displacements, i.e.,
the group of maps from $\R^3$ to itself that preserve distances and
orientation. It is well-known that $\SE$ is a semidirect product of
the translation subgroup and the orthogonal group $\SO$, which may be
identified with the stabilizer of a single point.

We denote by $\D:=\R+\eps\R$ the ring of dual numbers, with
multiplication defined by $\eps^2=0$. The algebra $\H$ is the
non-commutative algebra of quaternions, and $\D\H:=\D\otimes_\R\H$. 
The conjugate dual quaternion
$\overline{h}$ of $h$ is obtained by multiplying the vectorial part of $h$
by $-1$. The dual numbers $\norm(h) = h\overline{h}$ and $h+\overline{h}$ are
called the \emph{norm} and \emph{trace} of $h$, respectively.

By projectivizing $\D\H$ as a real 8-dimensional vector space, we
obtain $\P^7$. The condition that $\norm(h)$ is strictly real, i.e.\
its dual part is zero, is a homogeneous quadratic equation. Its zero
set, denoted by $S$, is called the Study quadric. The linear 3-space
represented by all dual quaternions with zero primal part is denoted
by $E$. It is contained in the Study quadric. The complement $S-E$ is
closed under multiplication and multiplicative inverse and therefore
forms a group, which is isomorphic to $\SE$ (see \cite[Section~2.4]{husty10}).

A nonzero dual quaternion represents a rotation if and only if its
norm and trace are strictly real and its primal vectorial part is
nonzero. It represents a translation if and only if its norm and trace
are strictly real and its primal vectorial part is zero. The
1-parameter rotation subgroups with fixed axis and the 1-parameter
translation subgroups with fixed direction can be geometrically
characterized as the lines on $S$ through the identity element $1$.
Among them, translations are those lines that meet the exceptional
3-plane~$E$.

Let $n\ge 4$. For the analysis of the configurations of a closed $n$R linkage with
links $o_1,\dots,o_n$, the actual shape of links is irrelevant; it is enough to know
the position of the rotation axes. Exploiting the fact that there is a bijection between lines
in $\R^3$ and involutions in $\SE$, we describe a closed $n$R linkage by a sequence $L
= (h_1,\ldots,h_n)$ of dual quaternions $h_1,\ldots,h_n$ such that $h_i^2=-1$ and $h_i\ne\pm h_{i+1}$
for $i=1,\dots,n$
(we set $h_{i+kn}=h_i$ and $o_{i+kn}=o_i$ for all $k\in\Z$). 
The line $h_i$ specifies the joint connecting the links $o_{i-1}$ and $o_i$.
The subgroup of rotations with axis $h_i$ is parametrized by $(t-h_i)_{t\in\P^1}$.
The pose of $o_i$ with respect to $o_n$ is 
then given by a product $(t_1-h_1)(t_2-h_2)\cdots(t_i-h_i)$, with
$t_1,\dots,t_i\in\P^1$. Setting $i:=n$, we get the closure condition
\begin{equation}
  \label{eq:1}
  (t_1-h_1)(t_2-h_2)\cdots(t_n-h_n) \in \R^\ast.
\end{equation}
The set $K$ of all $n$-tuples $(t_1,\dots,t_n)\in(\P^1)^n$ fulfilling
\eqref{eq:1} is called the \emph{configuration set} of the linkage $L$.

The dimension of the configuration set is called the \emph{mobility} of the linkage. 
We are mostly interested in linkages of mobility one.
Let $L=(h_1,\dots,h_n)$ be such a linkage. Let $K$ be its configuration curve.
For any two pair $o_i,o_j$ of links, there is a map
\[ f_{i,j}: K\to\P^7, (t_1,\dots,t_n)\to (t_{i+1}-h_{i+1})\dots(t_j-h_j) \]
parametrizing the motion of $o_j$ with respect to $o_i$. This map is
is called coupler map, and the image $C_{i,j}$ is the coupler curve. 
The {\em algebraic degree} of the coupler curve is defined as $\deg(C_{i,j})\deg(f_{i,j})$,
where $\deg(C_{i,j})$ is the degree of $C_{i,j}$ as a projective curve, and $\deg(f_{i,j})$
is the degree of $f_{i,j}$ as a rational map $K\to C_{i,j}$.

\section{Bonds: Definition and Main Properties}

In this section we recall the fundamentals of bond theory, as introduced in \cite{hegedus13b}.

Let $n\ge 4$ be an integer. Let $L=(h_1,\dots,h_n)$ be a closed $n$R linkage with mobility~1.
We assume, for simplicity, that the configuration curve $K\subset(\P^1_\R)^n$ has only one component
of dimension~1 (see Remark~\ref{rem:reduc} for a comment on the reducible case).
Let $K_\C\subset(\P^1_\C)^n$ be the Zariski closure of $K$. We set
\begin{equation}
  \label{eq:6}
  B := \{(t_1,\ldots,t_n) \in K_\C \mid
  (t_1-h_1)(t_2-h_2)\cdots(t_n-h_n) = 0\}.
\end{equation}
The set $B$ is a finite set of conjugate complex points on the
configuration curve's Zariski closure. 
If $K$ is a nonsingular curve,  then we define a bond as a point of $B$. 
If $K$ has singularities, then it is necessary to pass to the normalization $N(K)$
of $K$ as a complex algebraic curve, and a bond is then a point on $N(K)$ lying over $B$.
Zero-dimensional components of $K$ never fulfill the equation above and so they have no
effect on bonds.

Let $\beta$ be a bond lying over $(t_1,\ldots,t_n)$. By Theorem~2 in \cite{hegedus13b},
there exist indices $i,j \in [n]$, $i < j$, such that $t_i^2 + 1 = t_j^2 + 1 = 0$. 
If there are exactly two coordinates of $\beta$ with values $\pm \ci$, then
we say that $\beta$ {\em connects} joints $i$ and $j$. In general, the
situation, is more complicated. Let $\beta\in N(K)$ be a bond; we assume, for simplicity,
that it lies over a point $(t_1,\ldots,t_n)$ such that no $t_i$ is the infinite point
in $\P^1$.
For $i,j\in\{1,\dots,n\}$, we define 
\begin{equation}
  \label{eq:2}
  \begin{gathered}
    F_{i,j}(\beta) = (t_{i+1}(\beta)-h_{i+1})\cdots(t_j(\beta)-h_j)
    \in \D\H,
  \end{gathered}
\end{equation}
The distinction between $F_{i,j}$ and $f_{i,j}$
is necessary because $F_{i,j}$ may vanish at the bonds, and then it does not
give a well-defined pose in $\P^7$. We define $v_\tau(i,j)$ as the vanishing
order of $F_{i,j}$ at $\tau$. 
We define the connection number
\[ k_\beta(i,j) := v_\beta(i,j-1)+v_\beta(i-1,j)-v_\beta(i,j)-v_\beta(i-1,j-1) . \]

We visualize bonds and their connection numbers by \emph{bond diagrams.}
We start with the link diagram, where vertices correspond to links and edges
correspond to joints.
Then we draw $k_\beta(i,j)$ connecting lines between
the edges $h_i$ and $h_j$ for each set $\{\beta, \overline{\beta}\}$ of
conjugate complex bonds.
Since we cannot exclude that $k_\beta(i,j) < 0$, we visualize
negative connection numbers by drawing
the appropriate number of dashed connecting lines (because the
dash resembles a ``minus'' sign). No linkage in this paper
has a negative connection number. Actually, the authors do not know
if closed 6R linkages may or may not have bonds with negative
connection numbers.

\begin{thm}
  \label{thm:degree}
  The algebraic degree of the coupler curve $C_{i,j}$ can be read off
  from the bond diagram as follows: Cut the bond diagram at the
  vertices $o_i$ and $o_j$ to obtain two chains with endpoints
  $o_i$ and $o_j$; the algebraic degree of $C_{i,j}$ is the sum of all
  connections that are drawn between these two components
  (dashed connections counted negatively).
\end{thm}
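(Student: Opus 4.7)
The plan is to compute the algebraic degree $D_{i,j}:=\deg(C_{i,j})\deg(f_{i,j})$ of the coupler curve in two independent ways and show they agree. The geometric side realises $D_{i,j}$ as a line bundle degree on $N(K)$ minus the vanishing of $F_{i,j}$ at the bonds; the combinatorial side sums the ``mixed second differences'' $k_\beta$ by telescoping. The bridge is supplied by the closure condition and a conjugation symmetry between the two chains.

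After a cyclic relabelling I may assume $i=0$ modulo $n$, so the two chains obtained by the cut are $J_1=\{1,\ldots,j\}$ and $J_2=\{j+1,\ldots,n\}$. Each homogenised factor $u_k-h_kv_k$ is a global $\D\H$-valued section of the line bundle $L_k:=p_k^\ast\mathcal O_{\P^1}(1)$, where $p_k\colon(\P^1)^n\to\P^1$ is the $k$th projection. Pulled back to $N(K)$, the product $F_{0,j}$ is a section of $\bigotimes_{k\in J_1}L_k|_{N(K)}$ whose base divisor is $\sum_\beta v_\beta(0,j)[\beta]$, supported on the bonds. Dividing out this base divisor and projectivising gives $f_{0,j}$, so
\[
  D_{0,j}=\sum_{k\in J_1}\deg\bigl(p_k|_{N(K)}\bigr)-\sum_\beta v_\beta(0,j).
\]

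For the combinatorial side, fix a bond $\beta$ and compute $S_\beta:=\sum_{a\in J_1,\,b\in J_2}k_\beta(a,b)$ by two successive telescopings of the identity $k_\beta(a,b)=v_\beta(a,b-1)+v_\beta(a-1,b)-v_\beta(a,b)-v_\beta(a-1,b-1)$: summing over $a\in\{1,\ldots,j\}$ collapses the interior indices, and then summing over $b\in\{j+1,\ldots,n\}$ collapses again, producing
\[
  S_\beta=v_\beta(0,n)-v_\beta(0,j)-v_\beta(j,n).
\]
Two further ingredients finish the argument. First, a closure identity $\sum_\beta v_\beta(0,n)=\sum_{k=1}^n\deg(p_k|_{N(K)})$: the full product $F_{0,n}$ lies in $\R^\ast$ on $K$, hence on $N(K)$ it is a \emph{scalar-valued} section of $\bigotimes_{k=1}^n L_k|_{N(K)}$ whose zero divisor has the full line bundle degree and is supported exactly on the bonds. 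Second, a symmetry $D_{0,j}=D_{j,n}$: the relation $F_{0,j}\cdot F_{j,n}\in\R^\ast$ forces $[F_{0,j}]=[\overline{F_{j,n}}]$ away from bonds, so $f_{0,j}$ and $f_{j,n}$ differ by the projective linear involution $[g]\mapsto[\overline g]$ of $\P^7$ and hence have equal algebraic degree. Adding the two expressions for $D$ yields $2D_{0,j}=\sum_\beta S_\beta$; pairing conjugate bonds $\{\beta,\overline\beta\}$, which contribute equally, produces exactly the number of drawn connections crossing the cut.

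The main subtlety will be the line-bundle bookkeeping at bonds: one has to know that $F_{0,n}$ is scalar-valued on the whole complex curve $N(K)$ (the scalar-valuedness is an algebraic condition, hence descends from the real locus $K$ to its Zariski closure $K_\C$) and that its only zeros are bonds, which is built into the definition of $B$. The excerpt's standing assumption that no $t_k(\beta)$ is infinite ensures that each affine factor $t_k-h_k$ has the same vanishing order as its homogenisation at every bond, so the degree accounting is faithful. Everything else is clean telescoping plus the observation that dual-quaternion conjugation is a real linear automorphism of $\D\H$ and hence of $\P^7$.
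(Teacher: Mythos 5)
Your argument is correct, and it is genuinely different from what the paper does: the paper gives no self-contained proof at all, but defers to Theorem~5 and Lemma~2 of \cite{hegedus13b} and only sketches the idea there, namely that the algebraic degree of $C_{i,j}$ equals $\tfrac12\#\{\tau \mid \norm(f_{i,j}(\tau))=0\}$ and that all such $\tau$ are bonds. You instead compute $\deg(f_{i,j})\deg(C_{i,j})$ as the degree of $f_{i,j}^{*}\mathcal O_{\P^7}(1)$ on $N(K)$, i.e.\ $\sum_{k\in J_1}\deg(p_k|_{N(K)})-\sum_\beta v_\beta(0,j)$, and match it against the telescoped sum $\sum_\beta\bigl(v_\beta(0,n)-v_\beta(0,j)-v_\beta(j,n)\bigr)$ using two bridges: the closure identity (the full product is a scalar-valued section whose zero divisor is supported on $B$ with multiplicities $v_\beta(0,n)$, of total degree $\sum_k\deg(p_k|_{N(K)})$) and the conjugation symmetry $f_{j,n}\equiv\overline{f_{0,j}}$, which gives $D_{0,j}=D_{j,n}$ and hence $2D_{0,j}=\sum_\beta S_\beta$. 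I checked the telescoping (it uses $v_\beta(j,j)=0$ for the empty product, which you should perhaps state), the base-divisor bookkeeping (zeros of any partial product are automatically in $B$, and the hypothesis that no coordinate of a bond is $\infty$ keeps the homogenisation honest), and the invertibility argument behind $[F_{j,n}]=[\overline{F_{0,j}}]$ (valid off the finite set where $\norm(F_{0,j})=\prod_{k\le j}(t_k^2+1)$ vanishes, then extended since two morphisms of curves agreeing on a dense open set coincide); all are sound. What your route buys is a proof that avoids the norm quadric entirely and makes the paper's statement self-contained, deriving the ``cut and count'' rule directly from the definition of $k_\beta$ as a mixed second difference; what the paper's route buys is brevity and consistency with the framework already established in \cite{hegedus13b}, where the equivalence of the two definitions of connection numbers (their Lemma~2) is needed anyway. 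One small presentational gap: you should say explicitly that $S_{\overline\beta}=S_\beta$ (because the $h_k$ are real, so the vanishing orders are conjugation-invariant), which is what justifies halving when you pass from the sum over all bonds to the sum over conjugate pairs drawn in the diagram.
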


\begin{proof}
This is a consequence of Theorem 5 in \cite{hegedus13b}. Note that here
we give a different definition of connection numbers, but Lemma~2 in \cite{hegedus13b}
shows that the definitions are equivalent.

The basic idea of the proof is that the algebraic degree of $C_{i,j}$ is $\frac{1}{2}$ times
the number of points $\tau$ in the configuration curve such that $\norm(f_{i,j}(\tau))=0$.
All these points are bonds, and a closer investigation leads to the statement above.
\end{proof}

\begin{example}
  \label{ex:10}
  We illustrate the procedure for computing the degrees
  in Figure~\ref{fig:degree}. In order to determine the
  algebraic degree of the coupler curve $C_{3,5}$, we cut the bond diagram along
  the line through $o_3$ and $o_5$ and count the connections between
  the two chain graphs. There are precisely two of them, one
  connecting $h_1$ with $h_4$ and one connecting $h_2$ with $h_5$.
  Thus, the algebraic degree $d(3,5)$ of $C_{3,5}$ is two. 
\end{example}

\begin{figure}[htb]
  \centering
  \includegraphics{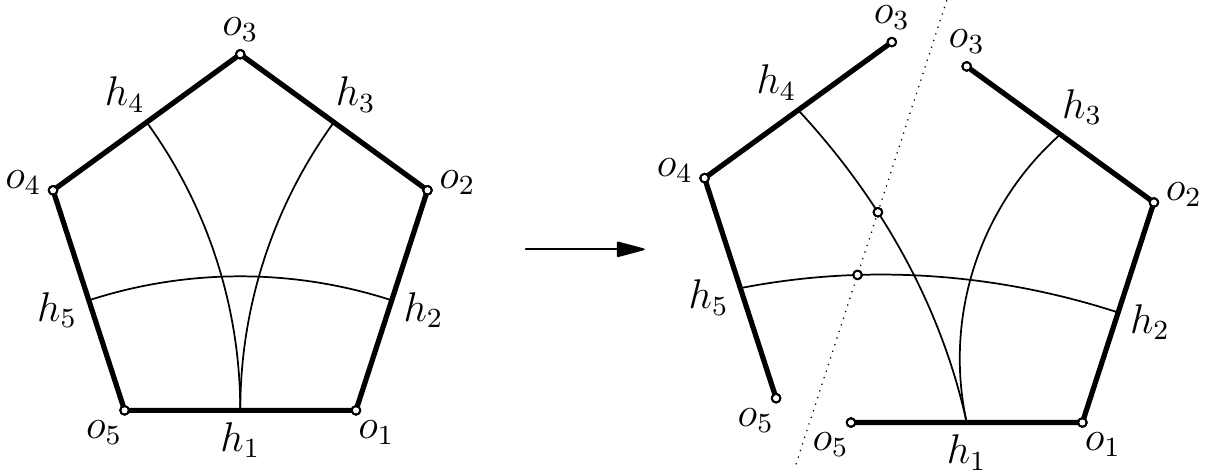}
  \caption{Computing the degree of coupler curves by counting
    connections in the bond-diagram. There are two connections 
    between the two chains, hence the algebraic degree of the
    coupler curve $C_{3,5}$ is two. }
  \label{fig:degree}
\end{figure}

For a sequence $h_i,h_{i+1},\dots,h_j$ of consecutive joints, we
define the \emph{coupling space} $L_{i,i+1,\dots,j}$ as the linear
subspace of $\R^8$ generated by all products $h_{k_1}\cdots
h_{k_s}$, $i\le k_1<\cdots <k_s\le j$. (Here, we view dual
quaternions as real vectors of dimension eight.) The empty product
is included, its value is $1$. The \emph{coupling dimension}
$l_{i,i+1,\dots,j}$ is the dimension of $L_{i,i+1,\dots,j}$ and the
\emph{coupling variety $X_{i,i+1,\dots,j}\subset\P^7$} is the set of
all products $(t_i-h_i)\cdots(t_j-h_j)$ with $t_k\in\P^1$ for
$k=i,\dots,j$ or, more precisely, the set of all equivalence classes
of these products in the projective space.

The coupling variety is a subset of the projectivization of the
coupling space. The relation between the coupler curve and the
coupling variety is described by the ``coupler equality''
$C_{i,j}=X_{i+1,\dots,j}\cap X_{i,\dots,-n+j+1}$.

The relation between bonds and coupling dimensions is described in the following

\begin{thm} \label{thm:coup}
  All coupling dimensions $l_{1,\ldots,i}$ with $1 \le i \le n$ are even. 
  We have $l_{1,2}=4$ and $k_\beta(1,2)=0$ for every bond $\beta$.
  If $k_\beta(1,3)\ne 0$ for some $\beta$, then $l_{1,2,3}\le6$.
  If $l_{1,2,3}=4$, then the lines $h_1,h_2,h_3$ are parallel or have a common point. 
\end{thm}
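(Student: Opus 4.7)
The plan is to treat the four assertions in order, relying on the algebra of $L_{1,\ldots,i}\subseteq\D\H$ for the first three and on the classical geometry of half-turns for the last. Throughout, $v_\beta$ will be treated as an ordinary valuation coming from a local parameter on the normalization $N(K)$, so that additivity arguments become legitimate whenever zero divisors stay out of the way.

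For evenness of $l_{1,\ldots,i}$, I would observe that left multiplication by $h_1$ is a real-linear endomorphism of $\D\H$ whose square is $-\mathrm{id}$, since $h_1^2=-1$. It stabilises $L_{1,\ldots,i}$, because applied to a generator $h_{k_1}\cdots h_{k_s}$ with $1\le k_1<\cdots<k_s\le i$ it yields either the generator $h_1h_{k_1}\cdots h_{k_s}$ (if $k_1>1$) or $-h_{k_2}\cdots h_{k_s}$ (if $k_1=1$). Hence $L_{1,\ldots,i}$ inherits a $\C$-vector space structure and has even real dimension. For $l_{1,2}=4$, the four elements $1,h_1,h_2,h_1h_2$ span, so $l_{1,2}\le 4$; the scalar ($1$-coefficient) part of each $h_i$ vanishes because $h_i^2=-1$, which separates $1$ from $\langle h_1,h_2\rangle$, and $h_1\ne\pm h_2$ then makes $h_1,h_2$ linearly independent over $\R$. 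This forces $l_{1,2}\ge 3$, pushed to $4$ by evenness. For $k_\beta(1,2)=0$, the definition reduces via $F_{1,1}=1$ to $v_\beta(0,2)-v_\beta(0,1)-v_\beta(1,2)$; using the factorisation $F_{0,2}=F_{0,1}F_{1,2}$ and the fact that the valuation is additive whenever at least one factor is a unit in $\D\H$ (which fails only when $t_1$ and $t_2$ are both equal to $\pm\ci$ at $\beta$), the expression vanishes. The residual corner case is handled by a direct expansion of $(\pm\ci-h_1)(\pm\ci-h_2)$.

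For parts~(c) and~(d) the strategy is to translate the hypotheses into linear relations among the eight monomial generators $1,h_1,h_2,h_3,h_1h_2,h_1h_3,h_2h_3,h_1h_2h_3$ of $L_{1,2,3}$. For~(c), assume $k_\beta(1,3)\ne 0$ at some bond $\beta$. Expanding $F_{0,3}$ in a local parameter $s$ at $\beta$, the identity $k_\beta(1,3)=v_\beta(1,2)+v_\beta(0,3)-v_\beta(1,3)-v_\beta(0,2)$ records ``extra cancellation'' between joints~1 and~3 beyond what the intermediate factor $(t_2-h_2)$ produces on its own. Reading off the Taylor coefficients of $F_{0,3}$ responsible for this cancellation yields two independent linear relations inside $L_{1,2,3}$ (one at top degree involving $h_1h_2h_3$, one subleading involving a degree-two monomial), so $l_{1,2,3}\le 8-2=6$. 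For~(d), if $l_{1,2,3}=4$ then in the generic situation $\{1,h_1,h_2,h_3\}$ already fills $L_{1,2,3}$, and in particular $h_1h_2\in\langle 1,h_1,h_2,h_3\rangle$. The equation $h_1h_2=\alpha+\beta h_1+\gamma h_2+\delta h_3$ with $\alpha,\beta,\gamma,\delta\in\D$, separated into primal and dual scalar and vectorial parts, produces Plücker-type relations on the direction and moment vectors of the three lines which are exactly the conditions for $h_1,h_2,h_3$ to lie in a spatial pencil, i.e.\ to be parallel or to share a common point. The handful of degenerate subcases (some $h_i$ real-dependent on the others) are absorbed into the same conclusion by a direct geometric check.

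The main obstacle I foresee is part~(c): extracting two genuinely independent relations from the single inequality $k_\beta(1,3)\ne 0$ demands a careful local expansion of $F_{0,3}$ that correctly tracks zero divisors in $\D\H$ through the product and apportions the contributions of $v_\beta(0,1)$, $v_\beta(1,2)$, $v_\beta(2,3)$ to $v_\beta(0,3)$. Once this local bookkeeping is in hand, the remaining parts reduce to linear algebra and the classical dictionary between half-turn compositions and pencils of lines.
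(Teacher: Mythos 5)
The paper itself disposes of this theorem by citation to \cite{hegedus13b}, offering only a one-line argument for evenness; your part (a) reproduces exactly that argument (left multiplication by $h_1$ squares to $-\mathrm{id}$ and stabilizes $L_{1,\ldots,i}$, so the coupling space is a $\C$-vector space), and your derivation of $l_{1,2}=4$ from $l_{1,2}\ge 3$ plus evenness, and of $k_\beta(1,2)=0$ from $v_\beta(0,1)=v_\beta(1,1)=v_\beta(1,2)=0$ together with $(\pm\ci-h_1)(\pm\ci-h_2)\ne 0$ (which ultimately rests on the hypothesis $h_1\ne\pm h_2$ in the definition of a linkage), is sound. The problems are in parts (c) and (d), where what you have written is a plan with genuine gaps rather than a proof.

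For (c), the ``careful local expansion of $F_{0,3}$'' that you flag as the main obstacle is a detour, and pushing it through as described would fail: higher Taylor coefficients of $F_{0,3}$ in a local parameter involve derivatives of the $t_i$ along $K$ and are not linear combinations of the monomials $h_{k_1}\cdots h_{k_s}$ alone, so they do not produce relations inside $L_{1,2,3}$. The whole content sits at order zero. Since $F_{1,2}=(t_2-h_2)$ never vanishes, $v_\beta(1,2)=0$, and your part (b) applied to the adjacent pairs $(1,2)$ and $(2,3)$ gives $v_\beta(0,2)=v_\beta(1,3)=0$; hence $k_\beta(1,3)=v_\beta(0,3)$, and $k_\beta(1,3)\ne 0$ says precisely that $(t_1-h_1)(t_2-h_2)(t_3-h_3)=0$ at $\beta$ with $t_1,t_3\in\{\pm\ci\}$. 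Expanding this single identity yields one nontrivial $\C$-linear relation among the eight real monomial generators (the coefficient of $h_1h_2h_3$ is $-1$, or of $h_1h_3$ when $t_2=\infty$), so the $\C$-span of the generators has dimension at most $7$, whence $l_{1,2,3}\le 7$, and evenness from part (a) forces $l_{1,2,3}\le 6$. Your ``two independent real relations'' are just the real and imaginary parts of this one complex relation; you do not need to apportion vanishing orders among the factors at all.

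For (d), two corrections are needed. First, $L_{1,2,3}$ is by definition a real subspace of $\R^8$, not a $\D$-module, so the ansatz $h_1h_2=\alpha+\beta h_1+\gamma h_2+\delta h_3$ with $\alpha,\beta,\gamma,\delta\in\D$ is both unjustified and too generous (eight real parameters); the coefficients must be real, and you must also account for $h_1h_3$, $h_2h_3$ and $h_1h_2h_3$ lying in the span. Second, the ``degenerate subcases'' where $1,h_1,h_2,h_3$ are $\R$-dependent are exactly the coplanar configurations, so they are not a negligible fringe to be ``absorbed by a direct geometric check''; and the target statement is ``parallel or concurrent,'' which includes non-coplanar triples and is not the condition of lying in a planar pencil. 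The claim that the resulting Pl\"ucker-type identities are ``exactly'' the parallel-or-concurrent conditions is asserted, not derived; this is the step that actually needs the computation (e.g.\ normalizing $h_1=\qi$ and separating primal and dual, scalar and vectorial parts of the relations), and as written the proposal does not contain it.
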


\begin{proof}
This is part of Theorem~1, Theorem~3, and Corollary~3 in \cite{hegedus13b}. 
The first statement is a consequence of the fact that the coupling spaces 
can be given the structure of a complex vector space,
because they are closed under multiplication by $h_1$ from the left.
\end{proof}

We will also use a more precise description of the coupling varieties in each
of the three possible cases, which is interesting in itself.

\begin{thm} \label{thm:cvar}
If $l_{1,2,3}=4$, then $X_{1,2,3}$ is a linear projective 3-space, and its parametrization
by $t_1,t_2,t_3$ is a 2:1 map branched along two quadrics in this 3-space.

If $l_{1,2,3}=6$, then $X_{1,2,3}$ is either a complete intersection of two quadrics or
an intersection of three quadrics in a 5-space
and its parametrization by $t_1,t_2,t_3$ is birational. 

If $l_{1,2,3}=8$, then $X_{1,2,3}$ is a Segre embedding of $(\P^1)^3$ in $\P^7$, and
its parametrization by $t_1,t_2,t_3$ is an isomorphism.
\end{thm}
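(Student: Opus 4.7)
The strategy is to factor the parametrization $\phi\colon(\P^1)^3\to\P^7$, $(t_1,t_2,t_3)\mapsto(t_1-h_1)(t_2-h_2)(t_3-h_3)$, through the Segre embedding. Expanding the product as $\sum_{S\subseteq\{1,2,3\}}(-1)^{|S|}\bigl(\prod_{i\notin S}t_i\bigr)\prod_{i\in S}h_i$ exhibits $\phi$ as the composition $\phi=\pi_L\circ\sigma$, where $\sigma\colon(\P^1)^3\hookrightarrow\P^7$ is the Segre embedding (in the monomial basis $\prod_{i\notin S}t_i$) and $\pi_L\colon\P^7\dashrightarrow\P(L_{1,2,3})$ is the linear projection induced by the surjection $\R^8\to L_{1,2,3}$ sending the standard basis to the eight products $h_{k_1}\cdots h_{k_s}$. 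Its center $\Lambda$ is a projective subspace of dimension $7-l_{1,2,3}$.

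The case $l_{1,2,3}=8$ is immediate: $\pi_L$ is an isomorphism, so $\phi=\sigma$ embeds $(\P^1)^3$ as the tri-Segre variety, an isomorphism onto its image. For $l_{1,2,3}=4$, Theorem~\ref{thm:coup} forces $h_1,h_2,h_3$ to share a common point or direction. After a suitable Euclidean change of frame, we may place this point at the origin, so that each $h_i$ has zero dual part and lies in the quaternion subalgebra $\H\subset\D\H$. Then $L_{1,2,3}\subseteq\H$, with equality by dimension; since $\phi$ is dominant for dimension reasons, $X_{1,2,3}=\P(\H)\cong\P^3$. To see that $\phi$ is 2:1, recovering $(t_1,t_2,t_3)$ from the image quaternion $q$ amounts to factoring the corresponding rotation about the common point as a product of three rotations about the fixed axes $h_1,h_2,h_3$, which by an Euler-angle type analysis admits exactly two solutions for a generic target. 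The branch locus is the ramification divisor of this 2:1 cover; one computes the Jacobian of $\phi$ and shows that it factors as a product of two quadratic forms on $\P(\H)$.

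For $l_{1,2,3}=6$, the projection center $\Lambda$ is a line in $\P^7$ and the target $\P(L_{1,2,3})$ is a 5-space. The tri-Segre is cut out in $\P^7$ by the $2\times 2$ minors of its three flattening matrices, so $X_{1,2,3}$ is cut out inside $\P(L_{1,2,3})$ by the push-downs of those minors, equivalently by the Segre minors that vanish on the two kernel vectors defining $\Lambda$. Birationality of $\phi$ follows by checking that $\Lambda$ is not a multi-secant identifying generic fibers, so that $\pi_L|_\sigma$ is generically 1:1. The stated dichotomy reflects the position of $\Lambda$ relative to the Segre: if $\Lambda$ is disjoint from Segre, the image retains degree $6$ and is cut out by a three-dimensional space of quadrics (hence is not a complete intersection); if $\Lambda$ is a secant line meeting the Segre in two points, then the image has degree $4$ and is a complete intersection of two quadrics.

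The main obstacle is the case $l_{1,2,3}=6$: one must verify birationality in a coordinate-free way, determine precisely when $\Lambda$ is a secant of the Segre, and identify explicit quadric generators of the ideal of $X_{1,2,3}$ from among the Segre minors. A secondary difficulty in the case $l_{1,2,3}=4$ is confirming that the branch divisor factors as a product of two quadrics rather than being irreducible of higher degree, which requires an explicit computation with the Jacobian determinant in quaternion coordinates.
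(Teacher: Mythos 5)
Your factorization $\phi=\pi_L\circ\sigma$ through the Segre embedding is exactly the framework the paper works in, and your treatment of $l_{1,2,3}=8$ coincides with its proof. For $l_{1,2,3}=4$ your plan is essentially workable (the paper cites the kinematics literature and describes the branch locus more geometrically: ramification occurs at the two values $t_2=u$ and $t_2=-u^{-1}$ for which the three axes become coplanar, and the two quadrics are the images of the resulting 2R sub-chains; this spares you the Jacobian computation you flag as a difficulty, and note you also silently drop the parallel-axes alternative from Theorem~\ref{thm:coup}).

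The genuine gap is in the case $l_{1,2,3}=6$, where your case distinction is not the right one and the branch you need cannot be completed. The fact you are missing --- which the paper imports from the proof of Theorem~1 in \cite{hegedus13b} --- is that $l_{1,2,3}=6$ forces a relation $(\ci-h_1)(s_2-h_2)(\pm\ci-h_3)=0$ for some $s_2\in\P^1_\C$, together with its complex conjugate. In your language this says the center $\Lambda$ always meets the complexified Segre variety in the two conjugate points $\sigma(\ci,s_2,\ci)$ and $\sigma(-\ci,\overline{s_2},-\ci)$ (after normalizing signs), so the alternative ``$\Lambda$ disjoint from the Segre'' never occurs; yet that vacuous branch is precisely what you use to produce the ``intersection of three quadrics'' alternative of the statement. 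The actual dichotomy is governed by whether $s_2=\overline{s_2}$. If $s_2\ne\overline{s_2}$, the two base points can be normalized to $(0,0,0)$ and $(\infty,\infty,\infty)$ and the image is the quartic threefold $x_0x_5=x_1x_4=x_2x_3$, a complete intersection of two quadrics. If $s_2=\overline{s_2}$, the two base points share their middle coordinate, and the normalized image is the \emph{cubic} threefold cut out by the three $2\times2$ minors of a $2\times3$ matrix (a Segre $\P^1\times\P^2$) --- a degree-3 variety, not the degree-6 one your ``disjoint'' case predicts. Birationality, which you leave as a verification that $\Lambda$ is ``not a multi-secant'', also comes for free from these explicit monomial normal forms. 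Without identifying the base points via the cited result, the $l_{1,2,3}=6$ case of your argument cannot be closed.
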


\begin{proof}
The first statement is well-known in kinematics. For non-parallel axes
it is, for example, implicit in the exposition of
\cite[Section~5]{selig05}. Branching occurs for co-planar joint axes,
that is, for two fixed values of $t_2 = u$ and $t_2 = -u^{-1}$. The
remaining 2R chains generate the two quadrics.

If $l_{1,2,3}=6$, then $(\ci-h_1)(s_2-h_2)(\pm\ci-h_3)=0$ for some $s_2\in\P^1_\C$, by 
the proof of Theorem~1 in \cite{hegedus13b}; we may assume that the third factor is $(+\ci-h_3)$.
Clearly there is also a complex conjugate relation $(-\ci-h_1)(\bar{s_2}-h_2)(-\ci-h_3)=0$.
The parametrization $p:(\P^1)^3\to X_{1,2,3}$ has two base points $(\ci,s_2,\ci)$ and $(-\ci,\overline{s_2},-\ci)$.
We distinguish two cases.

If $s_2\ne\overline{s_2}$, then we apply projective transformations moving the base points to $(0,0,0)$
and $(\infty,\infty,\infty)$. The transformed parametrization is
\[ (\P^1)^3\to\P^5, (y_1,y_2,y_3) \mapsto (x_0{:}x_1{:}x_2{:}x_3{:}x_4{:}x_5)
	=(y_1{:}y_2{:}y_3{:}y_1y_2{:}y_1y_3{:}y_2y_3) , \]
which is birational to the quartic three-fold defined by $x_0x_5=x_1x_4=x_2x_3$.

If  $s_2=\overline{s_2}$, then we apply projective transformations moving the base points to $(0,\infty,0)$ 
and $(\infty,\infty,\infty)$. The transformed parametrization is
\[ (\P^1)^3\to\P^5, (y_1,y_2,y_3) \mapsto (x_0{:}x_1{:}x_2{:}x_3{:}x_4{:}x_5)=(1{:}y_1{:}y_3{:}y_2{:}y_1y_2{:}y_2y_3) , \]
which is birational to the cubic three-fold defined by $x_0x_4=x_1x_3$, $x_0x_5=x_2x_3$, $x_1x_5=x_2x_4$.

If $l_{1,2,3}=8$, then the eight products generating $L_{1,2,3}$ are linearly independent, and it follows that the
parametrization is the Segre embedding in the projective coordinate system induced by this basis.
\end{proof}

All bonds connecting $h_1$ and $h_4$ satisfy $t_1,t_4\in\{+\ci,-\ci\}$. 
We will prove a lemma that is useful to give an upper bound for the number of bonds in some situations;
before that, we need an algebraic lemma.

\begin{lem} \label{lem:alg}
Let $h_1,h_2\in\D\H$ be dual quaternions representing lines (i.e. $h_1^2=h_2^2=-1$).
Let $\D_\C:=\D\otimes_\R\C$ and $\D\H_\C:=\D\H\otimes_\R\C$ be the extensions of the dual numbers/quaternions to $\C$.

(a) The left annihilator of $(\ci-h_1)$ is equal to the left ideal $\D\H_\C(\ci+h_1)$. 

(b) The intersection of this left ideal and the right ideal $(\ci-h_2)\D\H_\C$ is a free $\D_\C$-module
of rank~1. 

(c) The set of all complex dual quaternions $x$ such that $(\ci-h_2)x(\ci-h_1)=0$ is a free $\D_\C$-module
of rank~3.
\end{lem}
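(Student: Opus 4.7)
The plan is to handle the three parts in order, each reducing to a matrix computation in $\H_\C \cong M_2(\C)$ after the decomposition $h_i = p_i + \eps q_i$ (the relations $p_i^2 = -1$ and $p_i q_i + q_i p_i = 0$ are forced by $h_i^2 = -1$). For (a), the inclusion $\D\H_\C(\ci+h_1) \subseteq \{x : x(\ci - h_1) = 0\}$ is immediate from $(\ci + h_1)(\ci - h_1) = \ci^2 - h_1^2 = 0$. For the reverse, I would expand $x = a + \eps b$ and rewrite $x(\ci - h_1) = 0$ as the pair $a(\ci - p_1) = 0$ and $b(\ci - p_1) = a q_1$. In $M_2(\C)$ the matrix $\ci - p_1$ has rank one (its characteristic polynomial is $\lambda^2 + 1$), so the left annihilator of $\ci - p_1$ in $\H_\C$ is the $2$-dimensional left ideal $\H_\C(\ci + p_1)$ (one inclusion from $(\ci + p_1)(\ci - p_1) = 0$, the other by a dimension count), giving $a = \alpha(\ci + p_1)$. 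The key identity $(\ci + p_1) q_1 = q_1(\ci - p_1)$, equivalent to $p_1 q_1 + q_1 p_1 = 0$, rewrites $a q_1 = \alpha q_1(\ci - p_1)$, so the dual equation admits $b = \alpha q_1 + \gamma(\ci + p_1)$, and direct substitution gives $x = (\alpha + \eps \gamma)(\ci + h_1)$.

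For (b), applying (a) and its right-sided analogue identifies the intersection with the set of $w$ satisfying $w(\ci - h_1) = 0$ and $(\ci + h_2) w = 0$. Decomposing $w = a + \eps b$, the primal equations force $a \in \H_\C(\ci + p_1) \cap (\ci - p_2) \H_\C$, which in $M_2(\C)$ is the set of rank-one matrices with prescribed row and column spaces and is always one-dimensional over $\C$. For each such $a$ the two dual equations for $b$, after using $(\ci + p_1) q_1 = q_1(\ci - p_1)$ and $q_2(\ci + p_2) = (\ci - p_2) q_2$, reduce to prescribing one column $b u_{-1}$ and one row $v_{+2}^T b$ (coming from rank-one factorisations $\ci - p_1 = u_{-1} v_{-1}^T$ and $\ci + p_2 = u_{+2} v_{+2}^T$). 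The compatibility identity $v_{+2}^T (b u_{-1}) = (v_{+2}^T b) u_{-1}$ collapses to $0 = 0$ via the orthogonalities $v_{+j}^T u_{-j} = 0$ (which come from $(\ci + p_j)(\ci - p_j) = 0$), so the system is consistent with a one-dimensional solution space. The intersection has total $\C$-dimension $2$, and any element $w_0$ with nonzero primal part generates it as a free $\D_\C$-module of rank one, because $\{w_0, \eps w_0\}$ is then a $\C$-basis.

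For (c), let $\phi(x) = (\ci - h_2) x (\ci - h_1)$, a $\D_\C$-linear endomorphism of $\D\H_\C$. Factoring $\phi$ into right multiplication by $\ci - h_1$ followed by left multiplication by $\ci - h_2$, and using (a) and its right-sided analogue to identify the kernels of each factor, yields the short exact sequence
\[
0 \to \D\H_\C(\ci + h_1) \to \ker \phi \to \D\H_\C(\ci - h_1) \cap (\ci + h_2) \D\H_\C \to 0.
\]
The left term has $\C$-dimension $4$ and the right has $\C$-dimension $2$ by applying (b) with $(h_1, h_2)$ replaced by $(-h_1, -h_2)$, so $\dim_\C \ker \phi = 6$. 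For freeness, reduce mod $\eps$: the projection $\ker \phi \to \H_\C$ surjects onto $M_{\mathrm{pr}} = \{a : (\ci - p_2) a (\ci - p_1) = 0\}$ (of $\C$-dimension $3$), with surjectivity established by an analogous consistency check to that in (b); lifting a $\C$-basis of $M_{\mathrm{pr}}$ gives three $\D_\C$-generators of $\ker \phi$, and a Nakayama-style argument shows they are $\D_\C$-independent, hence a free basis. The main obstacle will be the consistency check in (b): one must carefully combine the anticommutation identities with the orthogonality relations built into the rank-one factorisations of $\ci \pm p_i$ to verify that the two dual equations for $b$ admit a common solution for every admissible $a$.
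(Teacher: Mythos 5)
Your proof is correct, but it takes a genuinely different route from the paper. The paper's proof is a two-line reduction: since the group of unit dual quaternions acts transitively on lines by conjugation, one writes $h_1=g_1\qi g_1^{-1}$, $h_2=g_2\qi g_2^{-1}$ and transports all three statements to the normal form $h_1=h_2=\qi$ via the $\D_\C$-linear bijection $q\mapsto g_2^{-1}qg_1$ (which carries the relevant ideals and annihilators to those of $\qi$); the standard case is then checked directly. You instead work with general $h_i=p_i+\eps q_i$, splitting each equation into its primal and dual parts and exploiting $\H_\C\cong M_2(\C)$, rank-one factorisations of $\ci\pm p_i$, and the anticommutation identity $(\ci+p_i)q_i=q_i(\ci-p_i)$. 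Your computations check out: the consistency condition in (b) does collapse via the orthogonalities $v_{+j}^Tu_{-j}=0$, and the dimension counts ($2$ in (b), $4+2=6$ in (c)) match what freeness of ranks $1$ and $3$ over $\D_\C$ requires. The paper's normalization buys brevity and hides all the dual-number bookkeeping in the single easy case $\qi$; your direct approach buys explicit descriptions of the solution modules for arbitrary lines (e.g.\ the explicit form $b=\alpha q_1+\gamma(\ci+p_1)$), at the cost of the delicate compatibility checks you flag. One simplification available to you in (c): surjectivity of $\ker\phi\to M_{\mathrm{pr}}$ need not be re-verified by another consistency check, since the kernel of that projection is $\eps M_{\mathrm{pr}}$ (dimension $3$), so the image has dimension $6-3=3$ and must equal the $3$-dimensional $M_{\mathrm{pr}}$.
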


\begin{proof}
For $h_1=h_2=\qi$, the proofs for all three statements are straightforward.

The group of unit dual quaternions acts transitively on lines by conjugation, so there exist invertible
$g_1,g_2\in\D\H$ such that $h_1=g_1\qi g_1^{-1}$ and $h_2=g_2\qi g_2^{-1}$. Then
\[ \{ q\mid q(\ci-h_1)=0\} = \{ q\mid qg_1(\ci-\qi)g_1^{-1}=0\} =  \{ q\mid qg_1(\ci-\qi)=0 \} = \]
\[ \D\H_\C(\ci+\qi)g_1^{-1}= \D\H_\C g_1^{-1}(\ci+h_1)=\D\H_\C(\ci+h_1) , \]
which shows (a). The $\D_\C$-linear bijective map $\D\H_\C\to\D\H_\C$, $q\mapsto g_2^{-1}qg_1$ 
maps the left ideal $\D\H_\C(\ci+h_1)$ to the left ideal $\D\H_\C(\ci+\qi)$ and the right ideal $(\ci-h_2)\D\H_\C$ 
to the right ideal $(\ci-\qi)\D\H_\C$, which shows (b). The same map also maps the set $\{ x\mid (\ci-h_2)x(\ci-h_1)=0\}$ to the 
set $\{ x\mid (\ci-\qi)x(\ci-\qi)=0\}$, which shows (c).
\end{proof}

\begin{lem} \label{lem:bb}
Assume that $l_{1,2,3}=l_{4,5,6}=8$. Then there are at most 2 bonds $\beta:=(t_1,\dots,t_6)$ 
connecting $h_1,h_4$ for fixed values of $t_1$ and $t_4$ in $\{+\ci,-\ci\}$ (counted with multiplicity).
\end{lem}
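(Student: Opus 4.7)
The plan is to use the identification $\D\H_\C \cong M_2(\D_\C)$ to exploit the rank-one structure of the special factors $H_1 := t_1 - h_1$ and $H_4 := t_4 - h_4$ with $t_1, t_4 \in \{\pm\ci\}$. Because $h_i^2 = -1$ gives $H_i \overline{H_i} = 0$, both $H_1$ and $H_4$ are rank-one matrices, so I would write $H_1 = a_1 b_1^\ast$ and $H_4 = a_4 b_4^\ast$ with nonzero $a_i \in \D_\C^2$ and $b_i^\ast \in (\D_\C^2)^\ast$. Setting $u := (t_2 - h_2)(t_3 - h_3)$ and $v := (t_5 - h_5)(t_6 - h_6)$, the closure equation at a bond then factors as
\[
H_1 u H_4 v = a_1 (b_1^\ast u a_4) b_4^\ast v = \alpha(u)\cdot a_1 (b_4^\ast v),
\]
where $\alpha(u) := b_1^\ast u a_4 \in \D_\C$ is $\C$-linear in $u$. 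Since $a_1 \ne 0$, this vanishes iff $\alpha(u) = 0$ (Case A) or $b_4^\ast v = 0$, equivalently $H_4 v = 0$ (Case B).

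The first substantive step is to eliminate Case B using $l_{4,5,6} = 8$. By Theorem~\ref{thm:cvar}, this hypothesis makes the parametrization $(t_4, t_5, t_6) \mapsto (t_4-h_4)(t_5-h_5)(t_6-h_6)$ an isomorphism onto a Segre variety in $\P^7$; in the basis of $\D\H_\C$ afforded by $l_{4,5,6} = 8$, the coefficient of $h_4 h_5 h_6$ in this product is $-1$, so the product is nowhere zero. Every bond with $t_1, t_4 \in \{\pm\ci\}$ must therefore satisfy $\alpha(u) = 0$. Now $\alpha|_{L_{2,3}}: L_{2,3} \to \D_\C$ is $\C$-linear with target dimension two, so its kernel has $\C$-dimension at least two; when the rank is exactly two, the projective kernel is a line in $\P(L_{2,3}) \cong \P^3$, which meets the smooth quadric surface $X_{2,3}$ in at most two points with multiplicity. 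This bounds the admissible pairs $(t_2, t_3)$ by two.

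To conclude I would argue that each admissible $(t_2, t_3)$ lifts to at most one bond on $K_\C$. Since $l_{1,2,3} = 8$ makes the parametrization of $X_{1,2,3}$ an isomorphism, the coupler map $f_{3,0}: K_\C \to C_{3,0} \subseteq X_{1,2,3} \cong (\P^1)^3$ essentially records $(t_1, t_2, t_3)$, and birationality of $f_{3,0}$ then forces each $(t_2, t_3)$ with prescribed $t_1$ to correspond to a single point of $K_\C$. The main obstacle is handling the degenerate sub-cases: when $\alpha|_{L_{2,3}}$ drops rank (so the projective kernel becomes a plane meeting $X_{2,3}$ in a conic, giving a one-parameter family of candidate pairs) or when $f_{3,0}$ has degree greater than one. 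I would expect to control these either by a careful multiplicity count, by a symmetric analysis of the analogous functional $\beta(v) := b_4^\ast v a_1$ on $L_{5,6}$, or by an argument that the combination of $l_{1,2,3} = l_{4,5,6} = 8$ with the finiteness of the bond set precludes such degenerations. Ruling out or quantitatively bounding these cases is the part that should require the most care.
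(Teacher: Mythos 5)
Your reduction to the linear functional $\alpha(u)=b_1^\ast u\,a_4$ is sound in spirit and is in fact very close to the quad-polynomial construction of Section~\ref{sec:quad}, where the same line $G=\P\bigl(\D\H_\C(\ci+h_4)\cap(\ci-h_1)\D\H_\C\bigr)$ is intersected with $X_{1,2,3}$ to produce a degree-two condition. But as written the proof has a genuine gap, and you have named it yourself: the degenerate cases are exactly where the argument must do its work, and you leave them open. If $\ker(\alpha|_{L_{2,3}\otimes\C})$ has projective dimension $2$, or if it is a line lying inside the quadric surface $X_{2,3}$ (necessarily a ruling $t_2=\mathrm{const}$ or $t_3=\mathrm{const}$), then your set of admissible pairs $(t_2,t_3)$ is infinite and the count collapses. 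Excluding this is not a routine multiplicity check: your candidate set does not see joints $5$ and $6$ or the configuration curve at all, so no local argument can rule out an infinite candidate set; one must either classify when a ruling can be annihilated by $\alpha$ (this is what Section~\ref{sec:quad} does for the lines on $X_{1,2,3}$, using the non-parallelity hypotheses), or --- as the paper's own proof does --- replace the candidate set by the actual coupler curve $C_{3,6}$, which is one-dimensional by the mobility assumption and cut out by quadrics, so that the line $G$ meets it with total multiplicity at most $2$. The paper's route also settles your worry about the degree of the coupler map: since the parametrizations of $X_{1,2,3}$ and $X_{6,5,4}$ are isomorphisms, a point of $G\cap C_{3,6}$ determines all six coordinates of the bond, so distinct bonds have distinct images.

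Two further points need repair. The dichotomy ``$\alpha(u)\,a_1(b_4^\ast v)=0$ iff $\alpha(u)=0$ or $b_4^\ast v=0$'' is false over $\D_\C$, which has zero divisors: both factors could be nonzero multiples of $\eps$. Here it is repairable --- the primal part of $h_4$ is a real quaternion, so $b_4$ has nonzero primal part, and $v$ is invertible at a bond connecting only $h_1$ and $h_4$, so one may cancel $v$ and use that $a_1b_4^\ast$ has nonzero primal part --- but the step requires this justification. Also, the lemma counts bonds with multiplicity; even after the degenerate cases are excluded you must check that the intersection multiplicity of $\ker(\alpha)$ with $X_{2,3}$ dominates the connection numbers. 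The paper absorbs this into the statement that tangential intersections of $G$ with $C_{3,6}$ account for higher connection numbers and that a curve cut out by quadrics admits no tritangent line.
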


\begin{proof}
Without loss of generality, we may assume $t_1=t_4=+\ci$; the other situations can reduced to this
case by replacing $h_1$ or $h_4$ or both by its negative. 

By the algebraic lemma above, the intersection of the left annihilator of $(t_4-h_4)$ 
and the right ideal $(t_1-h_1)\D\H_C$ is
a 2-dimensional $\C$-linear subspace. Let $G\subset\P^7$ be its projectivization.
Let $q:=f_{3,6}(\beta)$ be image of a bond $\beta=(t_1,\dots,t_6)$ connecting $h_1$ and $h_4$ with $t_1=t_4=+\ci$.
Then we have $q=(t_1-h_1)(t_2-h_2)(t_3-h_3)$, hence $q$ is in the right ideal $(t_1-h_1)\D\H_\C$.
Since $\beta$ connects $h_1$ and $h_4$, we have $q(t_4-h_4)=F_{3,6}(\beta)=0$,
$q$ is in the left annihilator of $(t_4-h_4)$, and therefore $q\in G$.

There exist  no two bonds $\beta_1,\beta_2$ with the same bond image $q$, because the parameterization
of $X_{1,2,3}$ by $t_1,t_2,t_3$ is an isomorphism, hence $q$ determines the first three coordinates,
and the parametrization of $X_{6,5,4}$ by $t_4,t_5,t_6$ is also an isomorphism, hence $q$ determines the second 
three coordinates.
This shows that the number of bonds connecting $h_1$ and $h_4$ with $t_1=t_4=+\ci$ is equal
to the number of intersections of $G$ and $C_{3,6}$; tangential intersections give rise to higher connection numbers.

On the other hand, $C_{3,6}$ is generated by quadrics, so
it does not have any tritangents, so the number of such bonds is at most 2.
\end{proof}

\section{Bounding the Genus} \label{sec:bd}

In this section, we prove that the genus of the configuration curve of a closed 6R linkage is at most 5.

Let $L=(h_1,\dots,h_6)$ be a closed 6R linkage with mobility~1.
We use the notation of the previous section.
As before, we assume that the configuration curve $K$ has only one irreducible one-dimensional component.
We write $g(K)$ for the genus of this component.

Here is an auxiliary Lemma.

\begin{lem} \label{lem:ag}
Let $C_1,C_2$ be two curves of genus at most 1. Let $C\subset C_1\times C_2$ be an
irreducible curve such that the two projections restricted to $C$ are either  birational
or 2:1 maps to $C_1$ resp. $C_2$. Then $g(C)\le 5$, with equality only if $g(C_1)=g(C_2)=1$
and both projections being 2:1.
\end{lem}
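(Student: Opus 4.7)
The plan is to reduce the statement to the classical Castelnuovo--Severi inequality. First let $\tilde C$ denote the normalization of $C$; this is a smooth irreducible projective curve with $g(\tilde C) = g(C)$ (geometric genus). The two factor projections then yield morphisms $\pi_i \colon \tilde C \to C_i$ of the same degree as the maps $C \to C_i$, hence of degree $1$ or $2$ by hypothesis.

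If one of the $\pi_i$, say $\pi_1$, is birational, then $g(C) = g(\tilde C) = g(C_1) \le 1$, which is strictly less than $5$, so the claimed bound holds with room to spare and the equality case does not occur here. I may therefore assume that both $\pi_1$ and $\pi_2$ have degree~$2$. The combined map $(\pi_1, \pi_2) \colon \tilde C \to C_1 \times C_2$ factors as $\tilde C \to C \hookrightarrow C_1 \times C_2$ and is thus birational onto the irreducible curve~$C$. Castelnuovo--Severi applied with $d_1 = d_2 = 2$ then gives
\[ g(C) \;\le\; d_1 g(C_1) + d_2 g(C_2) + (d_1-1)(d_2-1) \;=\; 2 g(C_1) + 2 g(C_2) + 1. \]
Combined with $g(C_i) \le 1$ this yields $g(C) \le 5$, and equality forces $g(C_1) = g(C_2) = 1$, as claimed.

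The only substantive step is the appeal to Castelnuovo--Severi, whose hypothesis is that the two covers share no nontrivial common intermediate cover, equivalently that $(\pi_1, \pi_2)$ is birational onto its image. This is immediate from the setup, since $C$ is given to us as an irreducible curve already realized inside $C_1 \times C_2$ and the projections in question are just the restrictions of the factor projections. I do not anticipate any essential difficulty; the only care needed is to fix the convention that ``$g(C)$'' refers to the geometric genus of the normalization, matching the form of Castelnuovo--Severi used above.
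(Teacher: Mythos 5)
Your proof is correct, and it takes a genuinely different route from the paper's. The paper argues directly on the surface $C_1\times C_2$ by cases: for $\P^1\times\P^1$ it notes a bi-degree $(2,2)$ curve has arithmetic genus $1$; for $\P^1\times(\text{elliptic})$ it computes the arithmetic genus via adjunction to get $3$; and for the product of two elliptic curves it applies the Hodge index theorem to $(C-2F_1-2F_2)^2\le 0$ to get $C^2\le 8$ and hence arithmetic genus at most $5$. You instead pass to the normalization $\tilde C$ and invoke the Castelnuovo--Severi inequality with $d_1=d_2=2$, which yields $g\le 2g(C_1)+2g(C_2)+1$ in one stroke and reproduces exactly the paper's bounds $1$, $3$, $5$ in the three sub-cases, together with the equality analysis. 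The two arguments are close in substance --- the Hodge index computation in the paper's hardest case is essentially the standard proof of Castelnuovo--Severi for a curve on a product of two curves --- but your version is more uniform, correctly verifies the birationality hypothesis of Castelnuovo--Severi from the fact that $\tilde C\to C\hookrightarrow C_1\times C_2$ is birational onto its image, and is slightly more careful than the paper in that it works with the geometric genus of a possibly singular $C$ (and, by lifting to normalizations if needed, with possibly singular $C_1,C_2$), whereas the paper implicitly identifies $C_1,C_2$ with $\P^1$ or a smooth elliptic curve and bounds the arithmetic genus of $C$ inside the product. What the paper's explicit computation buys is the concrete intermediate information (e.g.\ that the genus is exactly $3$ up to singularities in the mixed case), which it reuses informally elsewhere; what your approach buys is brevity and a single citable classical inequality.
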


\begin{proof}
If one of the two projections is birational, say the projection to $C_1$, then $g(C)=g(C_1)\le 1$.
So we may assume both projections are 2:1 maps.

If $C_1$ and $C_2$ are isomorphic to $\P^1$, then $C$ is a curve in $\P^1\times\P^1$ of bi-degree~2,
which has arithmetic genus~1. The geometric genus is 1 in the nonsingular case and 0 if $C$ has
a double point.

If $C_1=\P^1$ and $C_2$ is elliptic, then the numerical class group is generated by the two fibers
$F_1\cong C_2$ and $F_2\cong C_1$ of the two projections. The class of $C$ is $2F_1+2F_2$, and
the canonical class is $-2F_2$. Hence the arithmetic genus of $C$ is
$\frac{C(C+K)}{2}+1=2(F_1+F_2)F_1+1=3$.

If $C_1$ and $C_2$ are elliptic, then the canonical class of $C_1\times C_2$ is zero. If $F_1,F_2$
are fibers of the projections, then $F_1C=F_2C=2$ and $(F_1+F_2)^2=2$. By the Hodge index theorem,
$(C-2F_1-2F_2)^2\le 0$, which is equivalent to $C^2\le 8$.

Hence the arithmetic genus of $C$
is at most $\frac{C^2}{2}+1=5$.
\end{proof}


\begin{lem} \label{lem:4}
If $l_{1,2,3}=4$, then $g(K)\le 5$. 
\end{lem}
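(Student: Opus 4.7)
The plan is to embed $K$ in a product of two curves of genus at most one and then invoke Lemma~\ref{lem:ag}. Since $l_{1,2,3}=4$, Theorem~\ref{thm:cvar} gives that $X_{1,2,3}$ is a linear projective $3$-space and that the parametrization $p_{1,2,3}\colon(\P^1)^3\to X_{1,2,3}$ is a $2{:}1$ map. Set $D_1:=\pi_{1,2,3}(K)\subset(\P^1)^3$ and $D_2:=\pi_{4,5,6}(K)\subset(\P^1)^3$, the images of the two coordinate-triple projections. Since a point of $K\subset(\P^1)^6$ is recovered from the pair of its two triples, $K$ embeds as a closed subscheme of $D_1\times D_2$ via $(\pi_{1,2,3},\pi_{4,5,6})$.

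I first verify that the projections from $K$ onto $D_1$ and $D_2$ have the degrees required by Lemma~\ref{lem:ag}. Fixing $(t_4,t_5,t_6)\in D_2$, the closure condition determines the pose $(t_1-h_1)(t_2-h_2)(t_3-h_3)\in X_{1,2,3}=\P^3$ up to a real scalar, and the $2{:}1$ parametrization $p_{1,2,3}$ then gives exactly two lifts for $(t_1,t_2,t_3)$; hence $K\to D_2$ has degree $2$. Symmetrically, fixing $(t_1,t_2,t_3)\in D_1$ and invoking Theorem~\ref{thm:cvar} for $X_{4,5,6}$ yields degree $1$ if $l_{4,5,6}\ge 6$ and degree $2$ if $l_{4,5,6}=4$. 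In every case both projections are birational or $2{:}1$ as required.

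The core of the proof is the bound $g(D_i)\le 1$ for $i=1,2$. The curve $D_1$ is the preimage under $p_{1,2,3}$ of the coupler curve $C_{0,3}\subset X_{1,2,3}=\P^3$, and the restriction $p_{1,2,3}|_{D_1}\colon D_1\to C_{0,3}$ has degree $2$ because $D_1$ is stable under the deck transformation of $p_{1,2,3}$. The coupler equality gives $C_{0,3}=X_{1,2,3}\cap X_{6,5,4}$, so $C_{0,3}$ is a linear $\P^3$-section of $X_{4,5,6}$. Applying Theorem~\ref{thm:cvar} to $X_{4,5,6}$ in each of the three cases $l_{4,5,6}\in\{4,6,8\}$ (a linear $3$-space, a cubic or quartic threefold in $\P^5$, or the Segre threefold of degree $6$ in $\P^7$) bounds the degree of $C_{0,3}$ as a space curve by a small constant; Castelnuovo's bound controls $g(C_{0,3})$, and a Riemann--Hurwitz count for $D_1\to C_{0,3}$ with controlled ramification yields $g(D_1)\le 1$. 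A symmetric argument (using the coupler curve $C_{3,6}$ and the parametrization $p_{4,5,6}$) handles $D_2$. Lemma~\ref{lem:ag} applied to $K\subset D_1\times D_2$ then yields $g(K)\le 5$.

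The main obstacle I expect is the subcase $l_{4,5,6}=8$: there $X_{4,5,6}$ is the Segre threefold of degree $6$ in $\P^7$, and a generic $\P^3$-section is zero-dimensional, so $\P(L_{1,2,3})$ must sit in very special position for $C_{0,3}$ to be a curve at all. Extracting from this special position sharp enough bounds on the degree of $C_{0,3}$ and on the ramification of $D_1\to C_{0,3}$ to push the Riemann--Hurwitz step through is the technical heart of the argument; keeping the ramification count honest is the other delicate piece of bookkeeping.
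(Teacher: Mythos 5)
Your skeleton matches the paper's: project $K$ to the two coordinate triples $D_1,D_2\subset(\P^1)^3$, use the coupler curve (your $C_{0,3}$, the paper's $C_{3,6}=X_{1,2,3}\cap X_{6,5,4}$) as a common image, split on $l_{4,5,6}$, and finish with Lemma~\ref{lem:ag}. But the central claim you rely on --- $g(D_i)\le 1$ in all cases --- is false, and the case you defer cannot be resolved by your method. Concretely, when $l_{4,5,6}=6$ the intersection of the two linear spaces $L_{1,2,3}\cap L_{6,5,4}$ can be a plane, so the coupler curve is a plane conic (the Goldberg 5R configuration). The ramification of $D_1\to C_{3,6}$ is the intersection of that conic with the two branch quadrics of $p_{1,2,3}$, i.e.\ $8$ points, and Riemann--Hurwitz gives $g(D_1)=3$, not $\le 1$. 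Lemma~\ref{lem:ag} is then simply inapplicable. The paper escapes by observing that in this subcase the map $D_2\to C_{3,6}$ is an isomorphism, hence $K\cong D_1$ and $g(K)=3\le 5$ directly; your ``controlled ramification'' step would have to detect and handle this exception, and as stated it does not.

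Second, the subcase $l_{4,5,6}=8$ is not a matter of sharpening Castelnuovo-type bounds on a linear section of the Segre threefold: the honest conclusion there is that the configuration cannot occur. A curve cut out of the Segre embedding of $(\P^1)^3$ by four hyperplanes must be a twisted cubic; the axes $h_4,h_5,h_6$ can be recovered by factoring the corresponding cubic motion; and since $C_{3,6}$ lies in the linear $3$-space $\P(L_{1,2,3})$ it is a planar or spherical motion, which forces the whole linkage to be planar or spherical and contradicts the mobility~$1$ hypothesis (such 6R linkages have mobility~$3$). Without this kinematic contradiction your argument has no conclusion in that case. A smaller point: your assertion that $D_1\to C_{3,6}$ has degree $2$ ``because $D_1$ is stable under the deck transformation'' is unjustified --- the restriction can be birational, as the paper allows --- though this does not affect the applicability of Lemma~\ref{lem:ag}.
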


\begin{proof}
Let $C_1,C_2\subset (\P^1)^3$ be the projections of $K$ to $(t_1,t_2,t_3)$ and $(t_4,t_5,t_6)$,
respectively. Let $p_1:K\to C_1$ and $p_2:K\to C_2$ be the projection maps. The coupler curve
$C_{3,6}$ is a common image of $C_1$ and $C_2$, by the two sides of
the closure equation
\[ (t_1-h_1)(t_2-h_2)(t_3-h_3) \equiv (t_6-h_6)(t_5-h_5)(t_4-h_4) , \]
where we write $\equiv$ for equality in the projective sense, modulo scalar multiplication.
Let $f_1:C_1\to C_{3,6}$ and $f_2:C_2\to C_{3,6}$ be these two maps. Then $K$ is a component of
the pullback of $f_1,f_2$. We distinguish several cases.

Case 1: $l_{6,5,4}=4$. Then $C_{3,6}$ is the intersection of two
linear subspaces, hence a line by the mobility 1 assumption. One can
introduce an additional joint, rotational or translational, between
links $o_3$ and $o_6$, and the linkage decomposes into two 4-bar
linkages which are planar or spherical. The configuration curves of
these two linkages are isomorphic to $C_1$ and $C_2$. The maps
$f_1,f_2$ are restrictions of the 2:1 parametrizations of $X_{1,2,3}$
and $X_{6,5,4}$, hence they are either 2:1 or birational to the line
$C_{3,6}$. Therefore $p_1$ and $p_2$ are also either 2:1 or
birational. The configuration curve of a planar or spherical 4-bar
linkage is the intersection curve of two quadrics (see
\cite[Chapter~11, \S~8]{bottema90} for the planar and
\cite[\S~21]{mueller62} for the spherical case). Hence its genus is at
most~1. By Lemma~\ref{lem:ag}, $g(K)\le 5$.

Case 2: $l_{6,5,4}=6$. Then $X_{6,5,4}$ is an intersection of quadrics in a 5-space
and $X_{1,2,3}$ is a linear 3-space contained in the Study quadric. The intersection of both linear
spaces is either a line or a plane, because the vector space $L_{6,5,4}$ does not contain any 4-dimensional
subalgebras. Hence the intersection $C_{3,6}$ is either a line or a plane conic. If $C_{3,6}$
is a line, then we have a similar situation as before: the linkage decomposes into two 4-bar
linkages, one planar or spherical and the second being a Bennett linkage. In any Bennett linkage,
the maps from $K$ to $\P^1$ parametrizing the 4 rotations are isomorphisms. Therefore $K$ is
isomorphic to the configuration curve of the planar or spherical component, hence $g(K)\le 1$.
If $C_{3,6}$ is a plane conic, then we decompose it into two rotational linear motions with
coplanar axes. These two  axes form with $h_3,h_4,h_5$ a closed 5R linkage, which is known
as the Goldberg 5R linkage (see \cite{Goldberg}). Its configuration curve is rational,
more precisely the coupling map to the plane conic is an isomorphism (see \cite{hegedus13b}). Hence $K$ is isomorphic
to $C_1$. Now $f_1:C_1\to C_{3,6}$ has 8 branching points (counted with multiplicity), namely
the intersections of $C_{3,6}$ with the branching surface. By the Hurwitz genus formula,
it follows that $g(K)=3$; the genus may drop in case of singularities.

Case 3: $l_{6,5,4}=8$. Then $C_{3,6}$ is a curve in a Segre embedding of $(\P^1)^3$ in $\P^7$
cut out by four hyperplane sections. This is only possible if $C_{3,6}$ is a twisted cubic.
Then the lines $h_4$, $h_5$, $h_6$ could be re-covered from $C_{3,6}$ by factoring the
cubic motion parametrized by $C_{3,6}$  described in \cite{hegedus13a}. On the other hand,
$C_{3,6}$ is either a planar or spherical motion, hence the whole linkage is either planar or spherical,
and this contradicts the mobility 1 assumption, as planar and spherical 6R linkages have
mobility 3. So this case is impossible.
\end{proof}

\begin{rem} \label{rem:4}
If $g(K)\ge 4$, then we are in Case~1, and the linkage is a composite of two planar or
spherical 4-bar linkages with one common joint, which is removed from the 6-loop. The most
general linkage of this type is Hooke's linkage \cite{Hooke}, using two spherical linkages.
The genus of its configuration curve is generically~5, but it may drop in the presence
of singularities.
If we take two planar RRRP linkages and remove the common translational joint, then we obtain
the Sarrus linkage \cite{sarrus53} with two triples of parallel
consecutive axes. The bond diagrams of  both linkages can be seen
in Figure~\ref{fig:hookediet}(a).
\end{rem}

\begin{lem} \label{lem:6}
If $l_{1,2,3}=l_{6,5,4}=6$, then $g(K)\le 5$.
\end{lem}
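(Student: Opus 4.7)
The plan is to follow the same template as Lemma~\ref{lem:4}. Let $C_1, C_2\subset(\P^1)^3$ be the projections of $K$ onto the first three and the last three coordinates, and let $p_i:K\to C_i$ and $f_i:C_i\to C_{3,6}$ for $i=1,2$ be the induced maps, where $f_1, f_2$ come from the two sides of the closure identity
\[ (t_1-h_1)(t_2-h_2)(t_3-h_3) \equiv (t_6-h_6)(t_5-h_5)(t_4-h_4). \]
Since $l_{1,2,3}=l_{6,5,4}=6$, Theorem~\ref{thm:cvar} asserts that both parametrizations $(\P^1)^3\to X_{1,2,3}$ and $(\P^1)^3\to X_{6,5,4}$ are birational. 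Hence $f_1$ and $f_2$ are birational, and therefore $p_1, p_2$ are as well. Thus $K$, $C_1$, $C_2$, and $C_{3,6}$ share a common normalization, and the problem reduces to bounding $g(C_{3,6})$.

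Next, $C_{3,6}=X_{1,2,3}\cap X_{6,5,4}$ inside $\P^7$ and $C_{3,6}\subset P:=L_{1,2,3}\cap L_{6,5,4}$. Two $5$-dimensional linear subspaces of $\P^7$ meet in a linear subspace of projective dimension at least~$3$. In the generic case $\dim P=3$, the intersections $X_{1,2,3}\cap P$ and $X_{6,5,4}\cap P$ are each curves of degree at most $4$ inside $P\cong\P^3$, because by Theorem~\ref{thm:cvar} each $X$ is a cubic or quartic three-fold in its ambient $\P^5$. The mobility-$1$ assumption forces these two degree-at-most-$4$ curves in $\P^3$ to share a one-dimensional component, which must equal $C_{3,6}$. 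Since any curve of degree $\le 4$ in $\P^3$ has geometric genus at most $3$, we obtain $g(C_{3,6})\le 3\le 5$ in this generic case.

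The main obstacle is the non-generic situation $\dim P\in\{4,5\}$, where $L_{1,2,3}$ and $L_{6,5,4}$ are in special incidence. For these sub-cases I would use the explicit models of $X_{1,2,3}$ and $X_{6,5,4}$ from Theorem~\ref{thm:cvar} — the quartic three-fold $x_0x_5=x_1x_4=x_2x_3$ or the cubic three-fold cut out by $x_0x_4=x_1x_3$, $x_0x_5=x_2x_3$, $x_1x_5=x_2x_4$ — and pull back the defining equations of $X_{6,5,4}$ along the birational parametrization $(\P^1)^3\to X_{1,2,3}$. The result is a system of $(1,1,1)$- and $(2,2,2)$-equations on $(\P^1)^3$ whose common zero set is the pullback of $C_{3,6}$. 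A careful tri-degree count, relying on the special incidence to force the required drop in the number of independent equations, should bound $g(C_{3,6})$; as a fallback, one can project the pullback to a two-coordinate factor of $(\P^1)^3$ and apply Lemma~\ref{lem:ag}.

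The hardest part, I expect, will be the bookkeeping of the non-generic incidences. The optimistic expectation, consistent with Remark~\ref{rem:4}, is that genus $5$ is in fact never attained in the present lemma and is realized only in the $l_{1,2,3}=4$ case handled by Lemma~\ref{lem:4}; confirming this and ruling out pathological sub-cases (possibly by showing that higher incidences of $L_{1,2,3}$ and $L_{6,5,4}$ reduce the linkage to a composite of lower-dimensional mobile sub-linkages already analyzed) is where the real work lies.
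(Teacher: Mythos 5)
There is a genuine gap, and it sits exactly where the real content of this lemma lies. You dispose of the case $\dim\bigl(\P(L_{1,2,3})\cap\P(L_{6,5,4})\bigr)=3$ (the paper's $\dim V=4$ as vector spaces) by a degree count in $\P^3$, which is fine and even consistent with the paper, which gets $g\le 1$ there by observing that $C_{6,3}$ is cut out by linear forms and quadrics. But you label this the ``generic case'' and defer $\dim P\in\{4,5\}$ to a sketch, guided by the ``optimistic expectation'' that genus $5$ is never attained in this lemma. That expectation is false: by Remark~\ref{rem:6}, the linkages with $l_{1,2,3}=l_{6,5,4}=6$ and $\dim(L_{1,2,3}\cap L_{6,5,4})=5$ are precisely Dietmaier's family, whose bond diagram appears in Figure~\ref{fig:hookediet}(b) among the genus-$5$ cases in Theorem~\ref{thm:45}. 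So the sub-case you leave open is the one where the bound $5$ is sharp, and no tri-degree bookkeeping will show it is vacuous. The paper's actual argument there is short but essential: with $\dim V=5$, both coupling varieties are cut out by quadrics in their $\P^5$'s (Theorem~\ref{thm:cvar}), one of which can be taken to be the Study quadric; hence $C_{6,3}$ is a component of an intersection of three quadrics in $\P^4$, i.e.\ of a (possibly reducible, connected) curve of arithmetic genus $5$, whence $g(C_{6,3})\le 5$. Your proposed fallback of projecting to a two-coordinate factor and invoking Lemma~\ref{lem:ag} is not justified: Lemma~\ref{lem:ag} needs both projections to be birational or $2{:}1$ onto curves of genus at most $1$, and you give no reason why that holds here.

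Two smaller points. First, the case $\dim V=6$ (your $\dim P=5$, i.e.\ $L_{1,2,3}=L_{6,5,4}$) must be excluded outright; the paper does this by citing Lemma~6 of \cite{hegedus13b}, and your sketch does not address it. Second, in your $\dim P=3$ case you should note why $X_{1,2,3}\cap P$ and $X_{6,5,4}\cap P$ are proper (one-dimensional) intersections rather than surfaces; otherwise the degree-$\le 4$ claim does not follow.
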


\begin{proof}
Let $V:=L_{1,2,3}\cap L_{6,5,4}$. Then $4\le \dim(V)\le 5$. The  $\dim(V)=6$ case is not possible by
Lemma~6 in \cite{hegedus13b}. If $\dim(V)=4$, then $C_{6,3}$ is embedded into a three dimensional projective space $\P^3$.
The coupler varieties are defined by quadrics in $\P^5$, 
therefore the ideal of $C_{6,3}$ is generated by linear forms and quadrics, and so its genus is at most 1. 
The coupler map $f_{6,3}$ is birational, therefore $g(K)\le 1$. 
So we may assume $\dim(V)=5$. 

By Theorem~\ref{thm:cvar}, the varieties $X_{1,2,3}$ and $X_{6,5,4}$ are either complete intersections
of two quadrics or intersections of three quadrics. We may assume in each case that one of the defining equations 
is the equation of the Study quadric. If both varieties are complete intersections, then the coupler curve 
$C_{6,3}=X_{1,2,3}\cap X_{6,5,4}$ is defined by three quadratic equations and the linear forms defining $V$. 
It follows that $C_{6,3}$ is a complete intersection of three quadrics in $\P^4$, which implies $g(K)\le 5$, with
equality in the case that there are no singularities. If one or both of the two coupling varieties is defined
by three quadrics, then $C_{3,6}$ is a component of the intersection of the Study quadric and two other quadrics,
where we choose one quadric from the ideal of $X_{1,2,3}$ and the other from the ideal of $X_{6,5,4}$; both
choices should be generic in order to not increase the dimension of the intersection. Then $C_{3,6}$ is a component
of a connected reducible curve of arithmetic genus 5, and this implies that the arithmetic genus of $C_{3,6}$ is also
at most 5.
\end{proof}

\begin{rem} \label{rem:6}
In \cite{Dietmaier}, Dietmaier found a new linkage by a computer-supported numerical search.
It turns out, by comparing the geometric parameters, that his family is exactly 
the family of linkages with $l_{1,2,3}=l_{4,5,6}=6$ and $\dim(L_{1,2,3}\cap L_{6,5,4})=5$.
See Figure~\ref{fig:hookediet}(b) for the bond diagram of the Dietmaier linkage.
\end{rem}

\begin{figure*}[htb]
  \centering
  \includegraphics{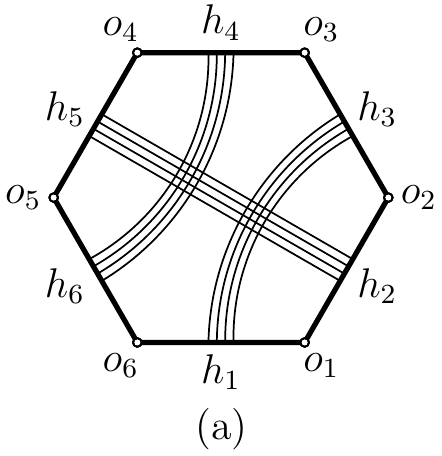}
  \quad
  \includegraphics{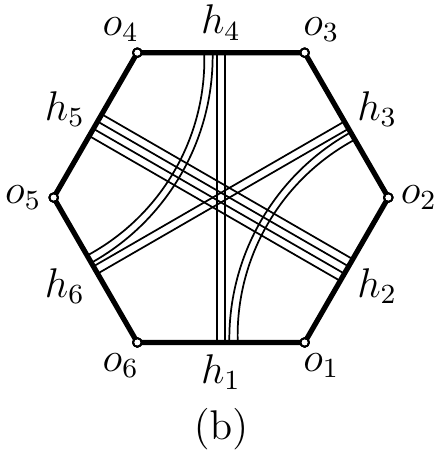}
  \quad
  \includegraphics{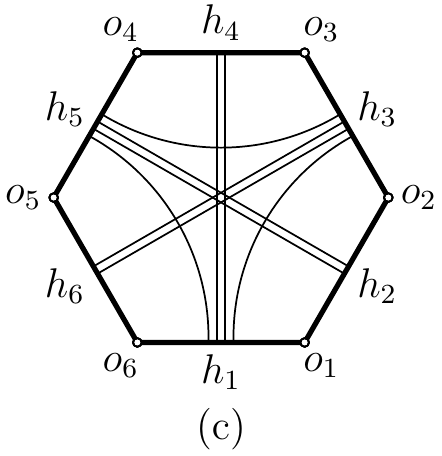}
  \quad
  \includegraphics{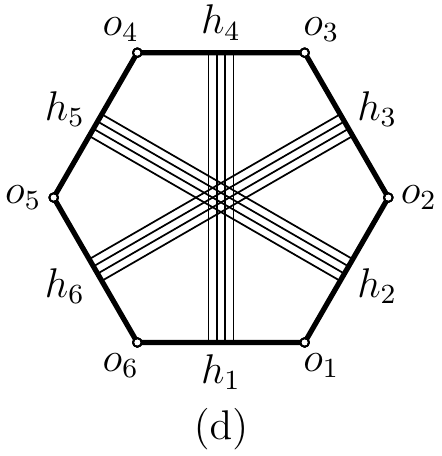}
  \caption{Bond diagrams for Hooke's double spherical linkage (a), Dietmaier's linkage (b), 
	Wohlhart's partially symmetric linkage (c), and Bricard's orthogonal linkage (d).}
  \label{fig:hookediet}
\end{figure*}

\begin{lem} \label{lem:68}
If $l_{1,2,3}=6$ and $l_{6,5,4}=8$, then $g(K)\le 3$.
\end{lem}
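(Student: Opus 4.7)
The plan is to apply Lemma~\ref{lem:ag} to the inclusion $K\subseteq C_1\times C_2$, where $C_1=p_1(K)\subseteq(\P^1)^3$ and $C_2=p_2(K)\subseteq(\P^1)^3$ are the projections of $K$ to the coordinate triples $(t_1,t_2,t_3)$ and $(t_4,t_5,t_6)$, and to reduce $g(K)$ to the arithmetic genus of a curve cut out on the Segre threefold.

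First I would control $C_2$. Because $l_{6,5,4}=8$, Theorem~\ref{thm:cvar} tells us that the parametrization $(\P^1)^3\to X_{6,5,4}$ is an isomorphism (the Segre embedding), so $C_2$ is identified with the preimage of $C_{3,6}$ under this map. Since $C_{3,6}\subseteq X_{1,2,3}\subseteq\P(L_{1,2,3})$, this preimage is contained in the scheme $\Gamma$ which is the Segre-preimage of the 5-plane $\P(L_{1,2,3})$. The two linear forms defining $\P(L_{1,2,3})$ in $\P^7$ pull back to two tri-linear $(1,1,1)$-forms on $(\P^1)^3$; in the generic case these are in general position, and $\Gamma$ is a complete-intersection curve of arithmetic genus one by adjunction (the canonical class $-2(H_1+H_2+H_3)$ of $(\P^1)^3$ cancels the sum of the two divisor classes). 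Hence $g(C_2)\le 1$. A parallel argument using the birationality of the $(t_1,t_2,t_3)$-parametrization of $X_{1,2,3}$ (also from Theorem~\ref{thm:cvar}) gives $g(C_1)\le 1$, and chasing the Segre isomorphism and the birational parametrization through the closure condition shows that $f_{3,6}\colon K\to C_{3,6}$ is generically one-to-one, so both $p_1$ and $p_2$ are birational. In this generic case Lemma~\ref{lem:ag} immediately yields $g(K)=g(C_{3,6})\le 1\le 3$.

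The main obstacle is the degenerate sub-case where the two $(1,1,1)$-forms defining $\Gamma$ share a common factor, so that $\Gamma$ contains an irreducible surface component $S$ of class $(1,0,0)$, $(1,1,0)$, or a cyclic permutation thereof. In this sub-case $C_2$ is no longer contained in a genus-one curve; instead it is cut out of $S$ by the additional $(2,2,2)$-forms that encode the defining quadrics of $X_{1,2,3}$ inside $\P(L_{1,2,3})$. I would enumerate the possible classes for $S$ and, for each, use adjunction on the resulting ruled surface (typically a $\P^1\times\P^1$) to bound the arithmetic genus of $C_2$. The worst case occurs when $S$ is of class $(1,1,0)$ and $C_2$ restricts to a curve of bi-degree $(4,2)$ on $S\cong\P^1\times\P^1$, with arithmetic genus $(4-1)(2-1)=3$, which is precisely the stated bound; the irreducibility of $K$ and the mobility-one hypothesis rule out configurations that would force the genus higher.
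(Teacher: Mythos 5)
Your plan follows essentially the same route as the paper: you distinguish whether the Segre preimage of the $5$-plane $\P(L_{1,2,3})$ in $(\P^1)^3$ is a complete intersection of two $(1,1,1)$-forms (a curve of arithmetic genus one, forcing $g(K)\le 1$ via the birational coupler map) or degenerates because the two forms share a common factor of tri-degree $(1,0,0)$ or $(1,1,0)$, and in the worst case you arrive at exactly the paper's bi-degree $(4,2)$ curve on $\P^1\times\P^1$ of arithmetic genus $(4-1)(2-1)=3$. The only piece left to finish is the $(1,0,0)$ case, which the paper dispatches by noting that a constant joint parameter makes the linkage effectively a 5R linkage, whose configuration curve is rational.
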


\begin{proof}
If $Y:=X_{6,5,4}\cap L_{1,2,3}$ has dimension~1, then its Betti table coincides with the Betti table
of $X_{6,5,4}$ and it follows that $Y$ is a union of curves with genus at most~1 (the genus 1 case  occurs
only if $Y$ is irreducible). Since $C_{6,3}\subseteq Y$, it follows that $g(C_{6,3})\le 1$, and
by birationality of $f_{6,3}$ we get $g(K)\le 1$.

Assume $Y$ is a surface. The preimage $Z$ of $Y$ under the parametrization $p:(\P^1)^3\to X_{6,5,4}$
is defined by two equations of tri-degree $(1,1,1)$, and because $Y$ is a surface, the two
equations must have a common divisor $F$ which defines $Z$. Up to permutation of coordinates,
the tri-degree of $F$ is either $(1,0,0)$ or $(1,1,0)$. In the first case, one of the angles
would be constant throughout the motion. Hence the 6R linkage is actually a 5R linkage with
an extra immobile axis somewhere; then $g(K)=0$ by the classification of 5R linkages (see \cite{hegedus13b} ) (if one
does not want to exclude this degenerate case). In the second case, we consider the
preimage $C'$ of $C_{6,3}$ under $p$. It is defined by $F$ and the pullback of the quadric
equations which defines $X_{1,2,3}$. Hence $C'$ is a component of the
complete intersection of two equations, with tri-degree $(1,1,0)$ and $(2,2,2)$. Using
the first equation, we can express the first variable by the second, and so we get an
isomorphic image of $C'$ in $(\P^1)^2$ of bi-degree $(4,2)$, which has arithmetic genus 3.
But $p$ is is an isomorphism by Theorem~\ref{thm:cvar}, hence $g(C_{6,3})\le 3$ and $g(K)\le 3$.
\end{proof}

\begin{rem}
An example of a linkage where $Y$ is a surface is
Wohlhart's partially symmetric linkage \cite{Wohlhart}
(see Figure~\ref{fig:hookediet}(c) for the bond diagram). 
We do not know if there exist also other linkages with $l_{1,2,3}=6$ and $l_{6,5,4}=8$
and $g(K)=3$.
\end{rem}

\begin{lem} \label{lem:8}
If $l_{i,i+1,i+2}=8$ for $i=1,\dots,6$, then $g(K)\le 5$.
\end{lem}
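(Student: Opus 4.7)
The plan is to exploit the hypothesis $l_{i,i+1,i+2}=8$ for all $i$ to reduce to a question about curves in $(\P^1)^3$. By Theorem~\ref{thm:cvar}, every parametrization $(\P^1)^3\to X_{i,i+1,i+2}$ is a Segre isomorphism, and in particular the coupler map $f_{3,6}\colon K\to C_{3,6}$ is birational (its fiber is a single point, since $(t_1,t_2,t_3)$ determines and is determined by the projective class of $(t_1-h_1)(t_2-h_2)(t_3-h_3)$). Hence $K\cong C_{3,6}$. The coupler curve $C_{3,6}$ sits in the intersection of two Segre 3-folds $X_{1,2,3}\cap X_{6,5,4}\subset\P^7$; transferring via the Segre iso $X_{1,2,3}\cong(\P^1)^3$ yields an irreducible curve $C_1\subset(\P^1)^3$ of tridegree $(d_1,d_2,d_3)$, cut out by the tridegree-$(2,2,2)$ pullbacks of the quadrics defining $X_{6,5,4}$.

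The next step is to bound the tridegree using bond theory. Theorem~\ref{thm:coup} together with its cyclic analogues, combined with the standing hypothesis, force $k_\beta(i,j)=0$ whenever the cyclic distance between $i$ and $j$ is at most two, so only the three opposite connection numbers $k_\beta(1,4)$, $k_\beta(2,5)$, $k_\beta(3,6)$ can be nonzero. Applying Theorem~\ref{thm:degree} to the partial coupler curve $C_{0,1}$---whose algebraic degree equals the degree of the projection $\pi_1\colon K\to\P^1_{t_1}$, namely $d_1$---identifies $d_1=\sum_\beta k_\beta(1,4)$, and Lemma~\ref{lem:bb} summed over the four sign choices of $(t_1,t_4)\in\{\pm\ci\}^2$ bounds this sum by $8$. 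By cyclic symmetry every $d_i\le 8$.

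To bound the arithmetic genus I would place $C_1$ on a $(2,2,2)$-hypersurface $S\subset(\P^1)^3$, which is generically a smooth K3 surface (adjunction gives $K_S=0$). Writing the class $\alpha=a_1h_1+a_2h_2+a_3h_3\in\operatorname{Pic}(S)$, with restricted fibers satisfying $h_i^2=0$ and $h_ih_j=2$ for $i\ne j$, the tridegree reads $d_i=2(a_j+a_k)$ and adjunction on the K3 yields $2p_a(C_1)-2=\alpha^2=4(a_1a_2+a_1a_3+a_2a_3)$.

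The main obstacle is sharpening this to $p_a\le 5$: the naive bound $a_j+a_k\le 4$ alone is far from sufficient. I expect the required refinement to combine the existence of a mirror description $C_2\subset(\P^1)^3$ via the second Segre isomorphism (with matching tridegree enforced by the closure equation) together with simultaneous applications of Lemma~\ref{lem:bb} to the two remaining opposite pairs $(h_2,h_5)$ and $(h_3,h_6)$; these constraints should rule out the borderline classes. Bricard's orthogonal linkage (Figure~\ref{fig:hookediet}(d)) provides the extremal $g=5$ example.
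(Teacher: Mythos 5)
The reduction at the start is sound: with all coupling dimensions equal to $8$, only opposite connections occur, $f_{3,6}$ is birational, and the degrees of the coordinate projections of $K$ are the opposite connection numbers. But there are two problems. First, a factor of two: connections in the bond diagram are drawn once per \emph{pair} of conjugate bonds, so Lemma~\ref{lem:bb} applied to the four sign choices of $(t_1,t_4)$ gives $b_1=b_1^++b_1^-\le 4$ for the degree of the projection to $t_1$, not $8$. Second, and decisively, the concluding genus bound is absent, and the route you propose cannot supply it. On a $(2,2,2)$-hypersurface $S\subset(\P^1)^3$ the restricted fiber classes satisfy $h_i^2=0$, $h_ih_j=2$, so $H=h_1+h_2+h_3$ has $H^2=12$ and $C\cdot H=d_1+d_2+d_3\le 12$; the Hodge index inequality then gives only $C^2\le 12$, i.e.\ $p_a(C)\le 7$ --- and this is the best one can extract from degree information alone, without assuming (as you do, illegitimately, and in a way that would force all $d_i$ to be even) that $C$ lies in the sublattice spanned by the $h_i$. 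The tridegree bound by itself does not separate $5$ from $7$.

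The missing idea, which is the heart of the paper's argument, is to project $K$ to the three copies of $(\P^1)^2$ given by \emph{opposite} coordinate pairs, $q_{i,i+3}:(t_1,\dots,t_6)\mapsto(t_i,t_{i+3})$, and to use the bonds as forced singular points of the images. If $q_{1,4}$ is birational, its image $C_{1,4}$ has bidegree $(b_1,b_1)$ with $b_1\le 4$, hence arithmetic genus $(b_1-1)^2\le 9$; in the extremal case $b_1^+=b_1^-=2$ all four points $(\pm\ci,\pm\ci)$ are double points of $C_{1,4}$ (two pairs of bonds lie over each), which lowers the genus to $9-4=5$, and the intermediate cases $b_1\le 3$ are handled similarly. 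If instead all three projections $q_{1,4},q_{2,5},q_{3,6}$ are $2{:}1$, two of them cannot factor through the same quotient, so $K$ maps birationally into $C_{1,4}\times C_{2,5}$ with both factors of genus at most $1$, and Lemma~\ref{lem:ag} gives $g(K)\le 5$. Neither the forced double points at $(\pm\ci,\pm\ci)$ nor the $2{:}1$ case analysis appears in your sketch, and you acknowledge the gap yourself; as written the argument does not establish $g(K)\le 5$.
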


\begin{proof}
By Theorem~\ref{thm:coup}, all bonds connect opposite joints: the bond diagram consists of
$b_1$ connections between $h_1$ and $h_4$,
$b_2$ connections between $h_2$ and $h_5$, and
$b_3$ connections between $h_3$ and $h_6$.
By Theorem~\ref{thm:degree}, the degree of $f_{6,1}$ and the degree of $f_{3,4}$ are both equal to $b_1$.
Note that $f_{6,1}$ and $f_{3,4}$ are the projections from $K$ to the first and to the fourth coordinate, 
respectively, up to isomorphic parameterization of the line describing rotations around $h_1$ and $h_4$,
respectively.
Similarily, the projections to $t_2,t_5,t_3,t_6$ have degree $b_2,b_2,b_3,b_3$.

Let $b_1^+$ be the number of pairs of complex conjugate bonds connecting $h_1$ and $h_4$ such that $t_1=t_4$
and $b_1^-$ be the number of pairs such that $t_1=-t_4$ (recall that $t_1^2=t_4^2=-1$).
The numbers $b_2^+,b_2^-,b_3^+,b_3^-$ are defined analogously.
By Lemma~\ref{lem:bb}, we have $b_1^+,\dots,b_3^-\le 2$. 

We consider the the projection $q_{1,4}:K\to(\P^1)^2, (t_1,\dots,t_6)\mapsto (t_1,t_4)$. 
The image of this curve has bi-degree $(r_1,r_1)$, with $r_1\deg(q_{1,4})=b_1$.
The preimage of $(\pm\ci,\pm\ci)$ consists entirely of bonds; moreover, if one of the coordinates
of a point on $C_{1,4}$ is equal to $\pm\ci$, then it must already be a bond. If, say,
$b_1^+=b_2^+=2$, and $q_{1,4}$ is birational, then $+\ci$ is a branching point for both projections, 
hence it must be a double point. If $q_{1,4}$ is not birational, then it is a 2:1 map, because
the preimage of any of the 4 points $(\pm\ci,\pm\ci)$ is at most 2. In this case, the numbers
$b_1^+$ and $b_1^-$ are either 0 or 2, and the bi-degree of $C_{1,4}$ is $(1,1)$ or $(2,2)$.
It follows that, in the 2:1 case, the curve $C_{1,4}$ has genus 0 or 1.
We now have to sort out several cases.

Case 1: the three maps $q_{1,4}$, $q_{2,5}$ and $q_{3,6}$ are 2:1 maps. It is not possible that
all three maps factor through the same 2:1 quotient, because $K$ is contained in the product 
$C_{1,4}\times C_{2,5}\times C_{3,6}$. Assume, without loss of generality, that $q_{1,4}$ and $q_{2,5}$
do not factor by the same quotient 2:1 quotient. Then $(q_{1,4},q_{2,5}):K\to C_{1,4}\times C_{2,5}$ is
birational. By Lemma~\ref{lem:ag}, the image has genus at most five, and therefore $g(K)\le 5$.

For the remaining cases, we may assume that $q_{1,4}$ is birational.

Case 2: $b_1=3$. Then the arithmetic genus of $C_{1,4}$ is $(b_1-1)^2=4$. Since
$b_1^++b_1^-=3$, at least one of the two numbers is equal to two; assume, without loss of generality,
that $b_1^+=2$ and $b_1^-=1$. Then $(+\ci,+\ci)$ and $(-\ci,-\ci)$ are double points of $C_{1,4}$,
and therefore $g(K)\le 4-2=2$.

Case 3: $b_1\le 2$. Then the arithmetic genus of $C_{1,4}$ is $(b_1-1)^2\le 1$.

Case 4: $b_1=4$, hence $b_1^+=b_1^{-}=2$. Then the arithmetic genus of $C_{1,4}$ is $(b_1-1)^2=9$,
and all four points $(\pm\ci,\pm\ci)$ are double points. Then $g(C_{1,4})\le 9-4=5$, and
therefore $g(K)\le 5$.
\end{proof}

\begin{cor} \label{rem:8}
The maximal genus 5 is reached in Case~1 when all $C_{1,4}$, $C_{2,5}$, and $C_{3,6}$ are elliptic
and have bi-degree $(2,2)$, and in Case~3; in both cases, we have $b_1=b_2=b_3=4$.
\end{cor}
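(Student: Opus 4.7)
The plan is to inspect each of the two configurations in which Lemma~\ref{lem:8} can attain $g(K)=5$ and verify, in each, that $b_1=b_2=b_3=4$ together with the stated geometric description of the coupler curves.

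First I treat Case~1 (all three $q_{i,i+3}$ are $2{:}1$). The proof of Lemma~\ref{lem:8} constructs a birational embedding $K\hookrightarrow C_{i,i+3}\times C_{j,j+3}$ for some pair of indices whose projections to $K$ do not factor through a common quotient, and Lemma~\ref{lem:ag} then gives $g(K)\le 5$. The equality clause of Lemma~\ref{lem:ag} requires both factors to be elliptic and both projections $2{:}1$. Under the $2{:}1$ assumption the bi-degree of $C_{i,i+3}\subset\P^1\times\P^1$ is either $(1,1)$ or $(2,2)$ by the discussion preceding the case split, and the $(1,1)$ option produces a rational curve; so each elliptic factor has bi-degree $(2,2)$. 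Cycling through a second admissible pair of indices reaches the third coupler curve, so all of $C_{1,4},C_{2,5},C_{3,6}$ are elliptic of bi-degree $(2,2)$, and in particular $b_i=2\cdot 2=4$.

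Next I treat the case $b_1=4$ of the lemma (where $q_{1,4}$ is birational). Equality $g(K)=g(C_{1,4})=5$ forces the arithmetic-to-geometric genus drop of $C_{1,4}$ to be exactly $4$; since Lemma~\ref{lem:bb} limits the multiplicity at each of $(\pm\ci,\pm\ci)$ to at most $2$, each of these four points must in fact be a genuine node, so $b_1^+=b_1^-=2$. To get $b_2=b_3=4$ I rerun the case analysis of Lemma~\ref{lem:8} after cyclically relabeling the joints so that, say, $q_{2,5}$ plays the role of the leading projection. Since $q_{1,4}$ is birational we cannot be in the relabeled Case~1, and the relabeled Cases~2 and~3 would bound $g(K)$ by $2$ and~$1$ respectively, contradicting $g(K)=5$; so the relabeling must land in the $b=4$ regime as well, yielding $b_2=4$ and, by a further relabeling, $b_3=4$.

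The step that I expect to require the most care is the relabeling argument for $b_2,b_3$: the lemma was organized under the standing assumption that $q_{1,4}$ is birational and said nothing about the nature of $q_{2,5}$ and $q_{3,6}$, so I must rule out a mixed configuration in which $q_{1,4}$ is birational while one of the remaining projections is $2{:}1$. The cleanest way to close this gap is to combine the bi-degree restriction $b_i^\pm\le 2$ from Lemma~\ref{lem:bb} with a Riemann--Hurwitz count applied to the putative $2{:}1$ factor; this should force the $2{:}1$ coupler curve into bi-degree $(2,2)$ with all four node fibres present, reintegrating the mixed situation into one of the two pure regimes already analysed and confirming $b_i=4$ throughout.
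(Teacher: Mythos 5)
Your reading of the corollary is the right one (the paper's ``Case~3'' can only mean the case $b_1=4$ of Lemma~\ref{lem:8}, since the case literally numbered~3 there gives $g(K)\le 1$), and your treatment of Case~1 is sound: the equality clause of Lemma~\ref{lem:ag}, applied to a pair of projections not factoring through a common $2{:}1$ quotient, forces both factors to be elliptic, hence of bi-degree $(2,2)$; and since ``inducing the same involution on $K$'' is an equivalence relation, every index occurs in some admissible pair, so all three coupler curves are reached and $b_i=r_i\deg(q_{i,i+3})=2\cdot 2=4$. The identification of the four nodes and $b_1^+=b_1^-=2$ in the birational case is also fine.

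The gap is exactly where you suspected it, and your proposed repair does not close it. If $q_{1,4}$ is birational with $b_1=4$ and $q_{2,5}$ is $2{:}1$, the discussion preceding the case split leaves $C_{2,5}$ of bi-degree $(1,1)$ or $(2,2)$, i.e.\ $b_2=2$ or $b_2=4$. A Riemann--Hurwitz count cannot exclude the $(1,1)$ option: a $2{:}1$ map from a genus-$5$ curve to a rational curve has ramification divisor of degree $12$, which is perfectly consistent, so the numerology alone never forces bi-degree $(2,2)$. What actually kills $b_2=2$ is that it would make $K$ hyperelliptic, whereas $g(K)=5$ in this case means $C_{1,4}$ is a $(4,4)$ curve whose only singularities are the four nodes $(\pm\ci,\pm\ci)$; its adjoint system (the $(2,2)$ forms through the four nodes) cuts out the canonical series of $K$ and defines a birational map, so $K$ is not hyperelliptic. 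This is precisely the canonical-map argument the paper deploys in the proof of Theorem~\ref{thm:45}. With it, the $2{:}1$ alternative forces $C_{2,5}$ elliptic of bi-degree $(2,2)$, and in either alternative $b_2=4$ (likewise $b_3=4$). So your structure is correct, but the ``Riemann--Hurwitz count'' must be replaced by the non-hyperellipticity argument.
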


As a consequence of Lemma~\ref{lem:4}, Lemma~\ref{lem:6}, Lemma~\ref{lem:68}, and Lemma~\ref{lem:8}
above, we finally obtain our bound for the genus.

\begin{thm} \label{thm:bound}
The genus of the configuration curve of a closed 6R linkage is at most 5.
\end{thm}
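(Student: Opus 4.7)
The plan is to assemble the preceding lemmas into a case analysis indexed by the three-joint coupling dimensions around the cycle. By Theorem~\ref{thm:coup} (and its cyclic analogues obtained by relabeling), each of the six coupling dimensions $l_{i,i+1,i+2}$, $i=1,\dots,6$, is even. Since $L_{i,i+1,i+2}\subseteq\D\H$, which has real dimension $8$, and since $l_{i,i+1}=4$ shows $l_{i,i+1,i+2}\ge 4$, each of these dimensions lies in $\{4,6,8\}$.

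First I would dispatch the ``small'' case: if some $l_{i,i+1,i+2}=4$, a cyclic relabeling of the joints brings us to the setting $l_{1,2,3}=4$, and Lemma~\ref{lem:4} gives $g(K)\le 5$ directly. So I may assume $l_{i,i+1,i+2}\in\{6,8\}$ for every $i$. Now consider the three ``opposite'' pairs of triples obtained from the three possible splittings of the $6$-cycle: $(l_{1,2,3},l_{4,5,6})$, $(l_{2,3,4},l_{5,6,1})$, $(l_{3,4,5},l_{6,1,2})$. Observe that $l_{4,5,6}=l_{6,5,4}$ since both count the dimension of the same linear subspace of $\D\H$.

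If some opposite pair contains a $6$, I relabel cyclically so that $l_{1,2,3}=6$. Then either $l_{6,5,4}=6$, in which case Lemma~\ref{lem:6} applies and yields $g(K)\le 5$; or $l_{6,5,4}=8$, in which case Lemma~\ref{lem:68} applies (with the roles of the two triples swapped by a reflection of the cycle if necessary) and yields the stronger bound $g(K)\le 3\le 5$. The remaining possibility is that every one of the three opposite pairs has both entries equal to $8$, which means $l_{i,i+1,i+2}=8$ for all $i=1,\dots,6$. Lemma~\ref{lem:8} then gives $g(K)\le 5$.

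There is no real obstacle here once the four lemmas are in place: the only points to verify are that the case split is exhaustive (which is just the pigeonhole across $\{4,6,8\}$ and the three splittings), that $l_{i,i+1,i+2}=l_{i+2,i+1,i}$ so that the two formulations of Lemma~\ref{lem:6} and Lemma~\ref{lem:68} match, and that cyclic relabeling is legitimate. The latter is immediate because the hypotheses of all four lemmas, and the quantity $g(K)$, depend only on the (cyclically ordered) linkage $L=(h_1,\dots,h_6)$ up to rotation and reflection of the index cycle. Combining the four bounds yields $g(K)\le 5$ in all cases, proving Theorem~\ref{thm:bound}.
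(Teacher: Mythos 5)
Your proposal is correct and follows the paper's own route: Theorem~\ref{thm:bound} is stated in the paper as an immediate consequence of Lemmas~\ref{lem:4}, \ref{lem:6}, \ref{lem:68}, and \ref{lem:8}, and your contribution is simply to make explicit the (correct and exhaustive) case analysis over the coupling dimensions $l_{i,i+1,i+2}\in\{4,6,8\}$, the legitimacy of cyclic relabeling and reflection, and the equality $l_{i,i+1,i+2}=l_{i+2,i+1,i}$, all of which the paper leaves implicit.
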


By re-examining the proof of Lemma~\ref{lem:8} more closely, we can prove the following theorem
which will be useful later for classifying linkages with a genus 5 configuration curve.

\begin{thm} \label{thm:45}
If the bond diagram is different from the diagrams Figure~\ref{fig:hookediet}(a), (b), and (d),
then $g(K)\le 3$.
\end{thm}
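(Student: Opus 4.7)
The plan is to revisit the case analysis of Lemmas~\ref{lem:4}, \ref{lem:6}, \ref{lem:68}, and \ref{lem:8} under the assumption $g(K)\ge 4$, and in each surviving sub-case pin down the bond diagram up to isomorphism. The claim to be proved is that only three sub-cases survive, and they correspond exactly to the diagrams (a), (b), and (d) of Figure~\ref{fig:hookediet}.

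First I would split the linkage at joints $h_3$ and $h_6$; other splittings are handled by the cyclic symmetry of the hexagon. If $l_{1,2,3}=4$, then by Lemma~\ref{lem:4} only Case~1 (with $l_{6,5,4}=4$) permits $g(K)\ge 4$, and the linkage is a Hooke/Sarrus composite whose bond diagram is (a) by Remark~\ref{rem:4}. If $l_{1,2,3}=l_{6,5,4}=6$, Lemma~\ref{lem:6} forces $\dim V=5$ whenever $g(K)\ge 4$, yielding Dietmaier's family with bond diagram (b) by Remark~\ref{rem:6}. The asymmetric case $\{l_{1,2,3},l_{6,5,4}\}=\{6,8\}$ is excluded by Lemma~\ref{lem:68}. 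The only remaining scenario is $l_{i,i+1,i+2}=8$ for every $i$.

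In that scenario, by Lemma~\ref{lem:8} and Theorem~\ref{thm:coup}, bonds connect only opposite joints, so the diagram is determined by three integers $b_1,b_2,b_3\le 4$ (using Lemma~\ref{lem:bb}). Cases~2 and 3 in the proof of Lemma~\ref{lem:8} already force $g(K)\le 2$, leaving Case~1 (all three $q_{i,i+3}$ are 2:1) or Case~4 (say $q_{1,4}$ birational with $b_1=4$). To match diagram (d) I must show $b_1=b_2=b_3=4$. In Case~4, $g(K)=g(C_{1,4})$ is the arithmetic genus $9$ of a $(4,4)$-curve in $\P^1\times\P^1$ minus the four nodes at $(\pm\ci,\pm\ci)$, i.e.\ $g(K)=5$. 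I would then analyse the induced projections $K\to C_{2,5}$ and $K\to C_{3,6}$ analogously; after cyclically relabeling the joints so Case~4 applies again, the same argument yields $b_2=b_3=4$. In Case~1, Lemma~\ref{lem:ag} already forces two of the three $C_{i,i+3}$ to be elliptic of bi-degree $(2,2)$, hence two of the $b_i$ equal $4$; a further application to the remaining pair produces the third $b=4$ as well.

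The main obstacle is this last deduction: showing that, in the all-$l=8$ scenario, $g(K)\ge 4$ truly rules out mixed configurations such as $b_1=b_2=4,\ b_3=2$. A naive Riemann--Hurwitz count on the 2:1 projections does not suffice by itself, because a degree-2 map from a genus-5 curve to $\P^1$ has a compatible branch count with no immediate contradiction. The remedy I envisage is to combine the Hurwitz count with the constraint that every node of $C_{1,4}$ arises from a bond, which pins down additional coordinates $(t_2,t_3,t_5,t_6)$ on those bonds and ultimately feeds into the bi-degree of $C_{3,6}$. Getting this bookkeeping right is the main technical step I expect in the argument.
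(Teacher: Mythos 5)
Your reduction to the case $l_{i,i+1,i+2}=8$ for all $i$ follows the same route as the paper, and you have correctly located the crux: ruling out mixed bond counts such as $b_1=b_2=4$, $b_3=2$ when $g(K)\ge 4$. But you leave exactly that step open, and neither of your two suggestions closes it. In your Case~4 you cannot ``cyclically relabel so that Case~4 applies again'' to conclude $b_2=b_3=4$: which case applies to $q_{2,5}$ and $q_{3,6}$ is precisely what is at issue, and the dangerous sub-case is that one of these maps is 2:1 onto a curve of bi-degree $(1,1)$, which is rational, giving $b_i=2$ and a diagram different from (d) while $K$ is a nodal $(4,4)$-curve of genus possibly $5$. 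Your fallback --- Riemann--Hurwitz plus bookkeeping of bond coordinates feeding into the bi-degree of $C_{3,6}$ --- is, as you yourself admit, not carried out, and it is not the mechanism that actually works. (A smaller point: your claim in Case~4 that $g(K)$ \emph{equals} $5$ presupposes that $C_{1,4}$ has no singularities beyond the four nodes at $(\pm\ci,\pm\ci)$, which is not justified, though harmless for the implication you need.)

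The missing ingredient is a hyperellipticity argument via the canonical map. In the mixed situation one projection, say $q_{1,4}$, is 2:1 onto a $(1,1)$-curve, so $K$ is hyperelliptic (or already has $g(K)\le 1$); if the other two projections are also 2:1 one concludes $g(K)\le 3$ from Lemma~\ref{lem:ag}, so one may assume some $q_{2,5}$ is birational onto a curve $C_{2,5}$ of bi-degree $(4,4)$ whose only forced singularities are the four nodes at $(\pm\ci,\pm\ci)$. The canonical system of such a nodal model is cut out by adjoint curves of bi-degree $(2,2)$ passing through the double points; if $C_{2,5}$ had at most one additional double point, this linear system would map $(\P^1)^2$ birationally onto a rational surface and hence would map $C_{2,5}\cong K$ birationally --- contradicting the fact that the canonical map of a hyperelliptic curve is 2:1. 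Therefore $C_{2,5}$ must carry at least two further double points or a triple point, which lowers the genus from $9-4=5$ to at most $3$. Without this (or an equivalent) argument, your proof does not establish the theorem.
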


\begin{proof}
In view of Remark~\ref{rem:4}, Lemma~\ref{lem:68}, and the proofs of Lemmas~\ref{lem:8} and \ref{lem:6}, 
we just need to consider the case where $l_{i,i+1,i+2}=8$ for $i=1,\dots,6$. Assume indirectly that $b_1<4$
(using the notation as in the proof of Lemma~\ref{lem:8}). If $q_{1,4}$ is birational, then it follows
$g(K)\le 3$, hence we may assume that $q_{1,4}$ is a 2:1 map. Hence $b_1=2$ and $C_{1,4}$ is a curve of
bi-degree $(1,1)$, which is rational. Consequently $K$ is hyperelliptic (or $g(K)\le 1$ and the proof
is finished).


If the other two maps $q_{2,5}$, $q_{3,6}$ are also 2:1 maps, then we have a 2:1 map from $K$ to a rational
curve and another 2:1 map to a curve of genus at most 1; by Lemma~\ref{lem:ag}, we obtain $g(K)\le 3$.

So we may assume there is another map, say $q_{2,5}:K\to C_{2,5}\subset(\P^1)^2$, which is birational.
Its image has then bi-degree $(b_2,b_2)$, and if $b_2\le 3$ then we again get $g(K)\le 3$. So we assume $b_2=4$.
Then $C_{2,5}$ has bi-degree $(4,4)$ and 4 double points $(\pm\ci,\pm\ci)$.
The canonical map of $C_{1,4}$ is defined by the polynomials of bi-degree $(2,2)$ passing to all 
$m$-fold singular points with order $m-1$. If there is at most one double point,
then it would just pass to the 4 double points $(\pm\ci,\pm\ci)$ and maybe one additional double point,
but this map maps $(\P^1)^2$ birational to a rational surface, and this contradicts to the fact that $C_{1,4}$
is  hyperelliptic, because the canonical map of a hyperelliptic curve is 2:1. Hence there must
be at least two more double points or a triple point on $C_{2,5}$, and so $g(K)\le 3$.
\end{proof}

\begin{rem} \label{rem:reduc}
If the configuration curve has more than one one-dimensional component, then one can define bonds
for the individual components. These bonds add up to a diagram which satisfies the same
conditions we just proved for bond diagrams of irreducible configuration curves. We conclude that
the genus of any component is at most 3 in a linkage with more than one component.
\end{rem}

\section{Quad Polynomials} \label{sec:quad}

In this section, we introduce a technique to derive algebraic equations on the parameters of a linkage
from the existence of bonds connecting opposite edges.

Let $L=(h_1,\dots,h_6)$ be a linkage, and let $\beta=(t_1,\dots,t_6)$ be a bond connecting $h_1$ and $h_4$.
We assume that $l_{1,2,3}=l_{6,5,4}=8$, and we fix $t_1$ and $t_4$ (e.g. $t_1=t_4=+\ci$). 
Let $G\subset\P^7$ be the line corresponding to the two-dimensional intersection 
of the left annihilator of $(t_4-h_4)$ and the right ideal $(t_1-h_1)\D\H_\C$ (see Lemma~\ref{lem:alg}).
The intersection of $G$ and $X_{1,2,3}$ can be computed by solving the vector-valued equation
$(\ci-h_1)(t_2-h_2)(t_3-h_3)(\ci-h_4)=0$ for $t_2,t_3$. Geometrically, this is the intersection
of a quadric surface with the linear subspace $\{x \mid (t_1-h_1)x(t_4-h_4)=0\}$. By Lemma~\ref{lem:alg},
this subspace has codimension~2. So the intersection is either $G$ or zero-dimensional of
degree~2. We can exclude the first case, because the lines on $X_{1,2,3}$ appear in three well-known families
(two of the three parameters being constant), and none of these families may contain $G$. Hence there
is a quadric univariate polynomial parametrizing the intersection of $G$ and $X_{1,2,3}$. 
Similarily, there is another univariate quadric polynomial parametrizing the intersection of $G$ and $X_{6,5,4}$.
The number of bonds connecting $h_1$ and $h_4$ is then bound above by the degree of the greatest common divisor
of these two polynomials. 

We describe this idea more concretely. Let $h_1,h_2,h_3,h_4$ be lines such that $l_{1,2,3}=8$.
We define the {\em quad polynomial} $Q_{h_1,h_2,h_3,h_4}\in\C[x]$ as the
unique normed generator of the elimination ideal $\C[x]\cap I$, where $I\subset \C[x,y,t_2,t_3]$
is the ideal generated by the coordinates of $(t_1-h_1)(t_2-h_2)(t_3-h_3)(t_4-h_4)$ and of
$(t_1-h_1)(t_2-h_2)(t_3-h_3)-y(1+x\eps)(+\ci-h_1)(+\ci+h_4)$. (Here we assume that
$h_1$ and $h_4$ are not parallel, which implies that $(+\ci-h_1)(+\ci+h_4)$ generates $G$ as a $\D$-module;
in the special case when $h_1$ and $h_4$ are parallel, we have to choose a different generator.)

In the following, we frequently write $a\equiv b$ for equality modulo multiplication by a nonzero
complex scalar (i.e. projective equality, or both sides equal to zero).

\begin{rem} \label{rem:linquad}
The degree of the quad polynomial is 2 unless there is a common intersection point of the coupling variety $X_{1,2,3}$,
the line $G$, and the linear 3-space consisting of all multiples of $\eps$. A closer analysis 
shows that the existence of such an intersection point is equivalent to either $h_1$ being parallel to $h_2$
or $h_3$ being parallel to $h_4$.
\end{rem}

\begin{thm} \label{thm:quad}
The number of bonds connecting $h_1$ and $h_4$ with $t_1=+\ci$ and $t_4=+\ci$, counted with multiplicity,
is less than or equal to the degree of the greatest common divisor of 
$Q_{h_1,h_2,h_3,h_4}$ and $Q_{h_4,h_5,h_6,h_1}$.

The number of bonds connecting $h_1$ and $h_4$ with $t_1=+\ci$ and $t_4=-\ci$, counted with multiplicity,
is less than equal to the degree of the greatest common divisor of 
$Q_{h_1,h_2,h_3,-h_4}$ and $Q_{-h_4,h_5,h_6,h_1}$.
\end{thm}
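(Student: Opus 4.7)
The plan is to associate to each bond $\beta$ with $t_1=t_4=+\ci$ a complex number $x(\beta)$ that is a common root of $Q_{h_1,h_2,h_3,h_4}$ and $Q_{h_4,h_5,h_6,h_1}$ with multiplicity at least the bond multiplicity. Distinct bonds produce distinct values of $x(\beta)$, which yields the claimed bound by $\deg\gcd$.

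For the setup, following the proof of Lemma~\ref{lem:bb}, I set $p:=(\ci-h_1)(t_2-h_2)(t_3-h_3)$ and $q:=(\ci-h_4)(t_5-h_5)(t_6-h_6)$. The bond equation $pq=0$ together with the invertibility of the trailing factor $(t_5-h_5)(t_6-h_6)$ (whose norm $(t_5^2+1)(t_6^2+1)$ is nonzero since $t_5,t_6\ne\pm\ci$) yields $p(\ci-h_4)=0$. By Lemma~\ref{lem:alg}(a) this places $p$ in the left ideal $\D\H_\C(\ci+h_4)$ and, combined with the tautological inclusion $p\in(\ci-h_1)\D\H_\C$, in the line $G:=\D_\C(\ci-h_1)(\ci+h_4)$. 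A symmetric argument, using the invertibility of the leading factor $(t_2-h_2)(t_3-h_3)$, puts $q$ in $G':=\D_\C(\ci-h_4)(\ci+h_1)$. Writing $p=y(1+x\eps)(\ci-h_1)(\ci+h_4)$ and $q=y'(1+x'\eps)(\ci-h_4)(\ci+h_1)$, the tuples $(x,y,t_2,t_3)$ and $(x',y',t_5,t_6)$ satisfy the generators of the ideals $I$ and $I'$ defining the two quad polynomials; hence $x(\beta)$ and $x'(\beta)$ are roots of $Q_{h_1,h_2,h_3,h_4}$ and $Q_{h_4,h_5,h_6,h_1}$ respectively.

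The main obstacle is proving $x(\beta)=x'(\beta)$, which is what turns two root sets into a common one. The identity $pq=0$ alone imposes no constraint on the parameters, since $(\ci+h_4)(\ci-h_4)=0$ forces this vanishing whatever the values of $x,x'$. The coupling must instead be drawn from the full closure $(t_1-h_1)\cdots(t_6-h_6)=c$ along $K_\C$ near $\beta$: expanding $p$, $q$, and $c$ in a local uniformizer $s$ and matching the leading-order term of $c(s)$ against the first-order deformations of $p$ and $q$ should produce a nondegenerate linear equation that forces the two dual-part parameters to coincide. Once $x=x'$ is in hand, the remainder is routine: since $l_{1,2,3}=l_{6,5,4}=8$, the parameterizations of $X_{1,2,3}$ and $X_{4,5,6}$ are isomorphisms by Theorem~\ref{thm:cvar}, so different bonds give different $p$ and hence different $x(\beta)$; the bond multiplicity at $\beta$ on $N(K)$ equals the intersection multiplicity of $G$ with $X_{1,2,3}$ at $p$, matching the multiplicity of $x(\beta)$ as a common root of both quad polynomials. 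The second case $t_1=+\ci$, $t_4=-\ci$ is handled identically after replacing $h_4$ by $-h_4$.
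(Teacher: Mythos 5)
There is a genuine gap, and you have located it yourself: the entire content of the theorem is that the two quad polynomials have a \emph{common} root, i.e.\ that $x(\beta)=x'(\beta)$, and this is exactly the step you do not prove. The paragraph beginning ``The main obstacle\dots'' only proposes a strategy (expand $p$, $q$ and the closure scalar in a local uniformizer and hope for ``a nondegenerate linear equation''), without carrying it out; as written it is a conjecture, not an argument, and it is not clear that matching leading-order terms of a product that vanishes to some unknown order at $\beta$ produces the asserted constraint. The paper closes this gap with a one-line observation that your setup already makes available: on $K$ the closure condition gives $f_{6,3}(\tau)\,f_{3,6}(\tau)\in\R^\ast$, and since $h\overline{h}=\norm(h)$ is a scalar, this forces $f_{3,6}(\tau)\equiv\overline{f_{6,3}(\tau)}$ as maps to $\P^7$; this identity of rational maps persists at the bond on the normalization. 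Quaternion conjugation reverses products, fixes the scalars $\ci$, $\eps$, $y$, $x$, and sends $h_i\mapsto -h_i$, so $\overline{y(1+x\eps)(\ci-h_1)(\ci+h_4)}\equiv(1+x\eps)(\ci-h_4)(\ci+h_1)$, whence $q\equiv(1+x\eps)(\ci-h_4)(\ci+h_1)$ and $x'=x$ because $u'=(\ci-h_4)(\ci+h_1)$ and $\eps u'$ are linearly independent. If you insert this conjugation argument in place of your uniformizer sketch, the rest of your write-up goes through.

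Two smaller points. First, your ``symmetric argument'' for $q\in G'$ does not follow from $pq=0$ by cancelling the factor $(t_2-h_2)(t_3-h_3)$, since that factor sits in the middle of the product; you need the cyclically shifted closure relation $(t_4-h_4)\cdots(t_6-h_6)(t_1-h_1)(t_2-h_2)(t_3-h_3)\in\R^\ast$ (which equals the same scalar, as one checks by conjugating with $f_{6,3}$), and then cancel the invertible \emph{trailing} factor $(t_2-h_2)(t_3-h_3)$ to get $q(\ci-h_1)=0$. Alternatively, once $q\equiv\overline{p}$ is established, $q(\ci-h_1)=0$ follows from $(\ci+h_4)(\ci+h_1)\cdot(\ci-h_1)\overline{(\cdots)}$, i.e.\ from conjugating $p(\ci-h_4)=0$. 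Second, your claim that the bond multiplicity equals the intersection multiplicity of $G$ with $X_{1,2,3}$ is asserted rather than proved; the paper is equally brief here, so I only flag it.
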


\begin{proof}
Let $\beta=(+\ci,t_2,t_3,+\ci,t_5,t_6)$ be a bond connecting $h_1$ and $h_4$. Then there exists a $z\in\C$
such that $f_{63}(\beta)\equiv (1+z\eps)(+\ci-h_1)(+\ci+h_4)$ in $\P^7$, and so
$Q_{h_1,h_2,h_3,h_4}(z)=0$. The image $f_{3,6}(\beta)$ in $X_{4,5,6}$ is the quaternion conjugate of
$f_{63}(\beta)$, which is equal to $(1+z\eps)(+\ci-h_4)(+\ci+h_1)$.
Hence $Q_{h_4,h_5,h_6,h_1}(z)=0$. So a bond gives
rise to a common root of $Q_{h_1,h_2,h_3,h_4}$ and $Q_{h_4,h_5,h_6,h_1}$. Also, a bond
with connection number two give rise to a common double root.

The second statement can be reduced to the first by replacing $h_4$ by its negative.
\end{proof}

\begin{rem} \label{rem:q1}
Of course there is an analogous statement for bonds with $t_1=-\ci$. However, we do not need these,
because these bonds are complex conjugate to bonds with $t_1=+\ci$. Indeed, it is straightforward to show
that $Q_{-h_1,h_2,h_3,-h_4}$ is the complex conjugate of $Q_{h_1,h_2,h_3,h_4}$ and 
$Q_{-h_1,h_2,h_3,h_4}$ is the complex conjugate of $Q_{h_1,h_2,h_3,-h_4}$. If we replace $h_2$ or $h_3$
by its negative, then the quad polynomial remains the same.
\end{rem}

\begin{rem} \label{rem:q2}
The argument of the proof of Theorem~\ref{thm:quad} can be partially reversed:
a common root of the quad polynomials $Q_{h_1,h_2,h_3,h_4}$ and $Q_{h_4,h_5,h_6,h_1}$ implies a 
common point of $X_{1,2,3}$ and $X_{6,5,4}$ with norm zero. Its preimage $\alpha\in (\P^1)^6$ satisfies
the equation $(t_1-h_1)\dots(t_6-h_6)=0$. But $\alpha$ is not necessarily a bond, because it also could be an
isolated intersection point of $X_{1,2,3}$ and $X_{6,5,4}$, and then it is not an element in the Zariski closure
of $K$.
\end{rem}

When the linkage moves, then the position of the lines change, and therefore also the quad polynomial
changes. In the following, we will replace the quad polynomials by similar polynomials which are invariant
under motions. They can be described in terms of geometric parameters which are also invariant under motion.
A well-known set of these invariant parameters are the Denavit--Hartenberg parameters \cite{husty10} defined as
follows. 

For $i=1,\dots,6$, $\phi_i$ is defined as the angle of the direction vectors of the directed lines 
$h_i$ and $h_{i+1}$. Since this angle is determined up to sign, we require $0\le\phi_i <\pi$.
We also set $c_i:=\cos(\phi)$. 

For $i=1,\dots,6$, $d_i$ is defined as the orthogonal distance of the lines $h_i$ and $h_{i+1}$.
The sign of $d_i$ is not well-defined (it would depend on an orientation of the common normal, which
we do not like to choose); we will discuss the ambiguity when it arises.

If the lines $h_i$, $h_{i+1}$ are not parallel, then define the Bennett ratios as  $b_i:=\frac{d_i}{\sin(\phi_i)}$. (We mean no conflict with the bond number introduced in the Proof of Lemma~8; actually, we will not
use these bond numbers from now on.)
The sign of these numbers is well-defined: if the scalar part of $h_ih_{i+1}$ is written as $u+v\eps\in\D$,
then $c_i=-u$ and $b_i=\frac{-v}{1-u^2}$.

If the lines $h_i$, $h_{i+1}$ are not parallel and $h_i$, $h_{i-1}$ are not parallel, then $s_i$ is defined
as the distance of the intersections of $h_i$ and the common normals of $h_i$ and $h_{i\pm 1}$ (this
parameters are called offsets). The sign of the offset is well-defined, because the two points lie on an oriented  line
induced by $h_i$.

It is well-known that the invariant parameters $c_1,\dots,c_6,b_1,\dots,b_6,s_1,\dots,s_6$ form a complete
system of invariants for all closed 6R linkages without adjacent parallel lines. In other words, if
two such linkages share all parameters, then there is a collection of rotations in the configuration 
of one of them that transform it into the second.
An extension to linkages with adjacent parallel lines is also well-known, but more technical. In this
paper, we will assume from now on that there are no parallel adjacent lines. (As a consequence, the
quad polynomials are always quadratic.)

Changing the orientation of $h_i$ has the following effect on the parameters: 
$c_i,b_i,c_{i-1},b_{i-1},s_i$ are multiplied by $-1$, and all other parameters stay the same.

\begin{rem}
The condition $l_{1,2,3}=4$ is equivalent to ($b_1=b_2=0$ and $s_2=0$), which is equivalent to the statement
that the lines $h_1,h_2,h_3$ meet in a common point. (If we had not exluded adjacent parallel lines,
then $c_1^2=c_2^2=1$ would be a second possibility.)

The condition $l_{1,2,3}=6$ is equivalent to ($b_1^2=b_2^2\ne 0$ and $s_2=0$) (see \cite{hegedus13b}, Theorem~1).
Indeed, ($b_1^2=b_2^2$ and $s_2=0$) is Bennett's characterization of three skew lines for the existence
of a fourth line forming a 4R linkage. This is the reason why the numbers $b_1,\dots,b_6$ are called ``Bennett ratios''.
\end{rem}

For moving a linkage, we take a configuration $(r_1,\dots,r_6)\in K$. We may choose one of the links
as a base which does not move. If $o_6$ is the base, then the transformed lines are
$(h_1',\dots,h_6')$, where 
\[ h_i'\equiv (r_1-h_1)\cdots(r_{i-1}-h_{i-1})h_i(r_{i-1}-h_{i-1})^{-1}\cdots(r_1-h_1)^{-1} \]
for $i=1,\dots,6$. Note that $h_1'=h_1$ and $h_6'=h_6$, because these two lines are attached to $o_6$.
The configuration set $K'$ of the transformed linkage is isomorphic to $K$ by the isomorphism
\[ (t_1,\dots,t_6) \mapsto (t_1',\dots,t_6') = 
	\left(\frac{1+r_1t_1}{r_1-t_1},\dots,\frac{1+r_6t_6}{r_6-t_6} \right) ; \]
this transformation has the property that $(t_i-h_i)(r_i+h_i)\equiv (t_i'-h_i)$
for $i=1,\dots,6$.

In order to study the effect of a motion on the quad polynomial, it is more convenient to choose
$o_2$ as the base. Then we have
\[ h_1'\equiv (r_2+h_2)h_1(r_2-h_2),\ h_2'=h_2,\ h_3'=h_3,\ h_4'\equiv (r_3-h_3)h_4(r_3+h_3) . \]

\begin{lem} \label{lem:shift}
The effect of motion on the quad polynomial is a variable shift: there exists a complex number $w$
such that $Q_{h_1',h_2',h_3',h_4'}(x)=Q_{h_1,h_2,h_3,h_4}(x+w)$. The shift is the same for the second
quad polynomial, i.e. we also have
$Q_{h_4',h_5',h_6',h_1'}(x)=Q_{h_4,h_5,h_6,h_1}(x+w)$.
\end{lem}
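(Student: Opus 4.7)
The plan is to compute both shifts directly and show they agree via the closure equation of the linkage.

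For the first equality, I will substitute the given formulas $(\ci-h_1')\equiv(r_2+h_2)(\ci-h_1)(r_2-h_2)$, $(\ci\pm h_4')\equiv(r_3-h_3)(\ci\pm h_4)(r_3+h_3)$, together with the identity $(t_i'-h_i)\equiv(t_i-h_i)(r_i+h_i)$ for $i=2,3$ (using $h_2'=h_2$, $h_3'=h_3$), into the defining equation of $Q_{h_1',h_2',h_3',h_4'}$. The key simplification is that $(r_2\pm h_2)$ and $(t_2-h_2)$ all lie in the commutative subalgebra $\R[h_2]$, so $(r_2-h_2)(t_2-h_2)(r_2+h_2)=(r_2^2+1)(t_2-h_2)$. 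After cancelling the outer factors $(r_2+h_2)$ and $(r_3+h_3)$, the equation reduces to
\[(\ci-h_1)(t_2-h_2)(t_3-h_3)\equiv(1+x'\eps)(\ci-h_1)(r_2-h_2)(r_3-h_3)(\ci+h_4).\]
By Lemma~\ref{lem:alg}, the expression $(\ci-h_1)(r_2-h_2)(r_3-h_3)(\ci+h_4)$ lies in the 2-dimensional $\C$-space $\D_\C\cdot(\ci-h_1)(\ci+h_4)$, so it equals $\lambda(1+w\eps)(\ci-h_1)(\ci+h_4)$ for some $\lambda\in\C^*$, $w\in\C$. Comparing with the unprimed relation $(\ci-h_1)(t_2-h_2)(t_3-h_3)\equiv(1+x\eps)(\ci-h_1)(\ci+h_4)$ then yields $x=x'+w$, proving $Q_{h_1',h_2',h_3',h_4'}(x)=Q_{h_1,h_2,h_3,h_4}(x+w)$.

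Next, I will observe that the quad polynomial is invariant under global conjugation $h_i\mapsto gh_ig^{-1}$ (the factors $g,g^{-1}$ cancel on both sides of its defining equation). This lets me repeat the argument with $o_5$ as the base to obtain the analogous shift $\tilde w$ for $Q_{h_4',h_5',h_6',h_1'}$, characterized by
\[(\ci-h_4)(r_5-h_5)(r_6-h_6)(\ci+h_1)=\mu(1+\tilde w\eps)(\ci-h_4)(\ci+h_1).\]

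To show $w=\tilde w$, I will invoke the closure equation $(r_1-h_1)\cdots(r_6-h_6)\in\R^*$, which implies $(r_2-h_2)(r_3-h_3)\equiv(r_1+h_1)(r_6+h_6)(r_5+h_5)(r_4+h_4)$ (using that $(r_i-h_i)^{-1}\equiv(r_i+h_i)$). Substituting this into the defining equation of $w$ and applying the identities $(\ci-h_1)(r_1+h_1)=(r_1-\ci)(\ci-h_1)$ and $(r_4+h_4)(\ci+h_4)=(r_4+\ci)(\ci+h_4)$, which are verified by direct expansion, yields
\[(\ci-h_1)(r_2-h_2)(r_3-h_3)(\ci+h_4)\equiv(r_1-\ci)(r_4+\ci)(\ci-h_1)(r_6+h_6)(r_5+h_5)(\ci+h_4).\]
The quaternion conjugate of the defining equation of $\tilde w$ simplifies to $(\ci-h_1)(r_6+h_6)(r_5+h_5)(\ci+h_4)=\mu(1+\tilde w\eps)(\ci-h_1)(\ci+h_4)$. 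Chaining the two gives $\lambda(1+w\eps)=(r_1-\ci)(r_4+\ci)\mu(1+\tilde w\eps)$, and comparing the $\eps$-coefficients (using that $(\ci-h_1)(\ci+h_4)\ne0$ generates a free $\D_\C$-module of rank one) forces $w=\tilde w$. The main obstacle is precisely this matching step: the individual computations of the two shifts are routine applications of Lemma~\ref{lem:alg}, but aligning them requires the closure condition, the specific factorization identities, and quaternion conjugation to all mesh together cleanly.
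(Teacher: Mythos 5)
Your proof is correct and follows essentially the same route as the paper: both identify the shift $w$ through the relation $(\ci-h_1)(r_2-h_2)(r_3-h_3)(\ci+h_4)\equiv(1\pm w\eps)(\ci-h_1)(\ci+h_4)$ in the rank-one $\D_\C$-module of Lemma~\ref{lem:alg}, and both invoke the closure equation to match the shifts of the two quad polynomials. Your second half simply makes explicit, via conjugation invariance and a change of base link to $o_5$, the ``similar computation'' that the paper only sketches.
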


\begin{proof}
Let $G\subset\P^7$ be the line through $u:=(+\ci-h_1)(+\ci+h_4)$ and $\eps u$. The 
intersection of $G$ with $X_{1,2,3}$ is of the form $(1+z\eps)u$, 
where $z$ is a root of $Q_{h_1,h_2,h_3,h_4}$; let $(+\ci,t_2,t_3)$
be its parameter values. The motion transforms the parameters to
$(+\ci,t_2',t_3')=\left(+\ci,\frac{1+r_2t_2}{r_2-t_2},\frac{1+r_3t_3}{r_3-t_3} \right)$,
parametrizing a point on $X'_{1,2,3}$. Just for verification, we may compute
\[ (+\ci-h_1')(t_2'-h_2')(t_3'-h_3')(+\ci-h_4') \equiv \]
\[ (r_2+h_2)(+\ci-h_1)(r_2-h_2)(t_2'-h_2)(t_3'-h_3)(r_3-h_3)(+\ci-h_4)(r_3+h_3)\equiv \]
\[ (r_2+h_2)(+\ci-h_1)(t_2-h_2)(t_3-h_3)(+\ci-h_4)(r_3+h_3)= 0 . \]
Hence $p:=(+\ci-h_1')(t_2'-h_2')(t_3'-h_3')$ lies on the line $G'$ through
$u':=(+\ci-h_1')(+\ci+h_4')$ and $\eps u'$. Projectively, we can write $p\equiv (1+z'\eps)u'$,
where $z'$ is a root of the transformed quad polynomial $Q_{h'_4,h'_3,h'_2,h'_1}$. We multiply
both sides on the left with $(r_2-h_2)$ and from the right with $(r_3-h_3)$ and obtain
\[ (r_2-h_2)p(r_3-h_3)\equiv (+\ci-h_1)(t_2-h_2)(t_3-h_3) , \]
\[ (1+z'\eps)(r_2-h_2)u'(r_3-h_3) \equiv (1+z'\eps)(+\ci-h_1)(r_2-h_2)(r_3-h_3)(+\ci+h_4) . \]
Now we define $w$ as the unique complex number such that 
$(+\ci-h_1)(r_2-h_2)(r_3-h_3)(+\ci+h_4) \equiv (1-w\eps)u$. Then we get 
$1+z'\eps=(1+z\eps)(1-w\eps) = (1+(z-w)\eps)$, 
hence the shift by $w$ transforms the roots of $Q_{h_1,h_2,h_3,h_4}$ to the roots of 
$Q_{h_1',h_2',h_3',h_4'}$. Because both polynomials are normed, the first statement of the theorem is proved.

For the shift of the second quad polynomial, we also need
\[ h_5' \equiv (r_3-h_3)(r_4-h_4)h_5(r_4+h_4)(r_3+h_3), \]
\[ h_6' \equiv (r_2+h_2)(r_1+h_1)h_6(r_1-h_1)(r_2-h_2). \]
A similar computation (using also the closure equation) shows that the shift value is
again equal to $w$.
\end{proof}

Now, let $L:=(h_1,\dots,h_6)$ be a closed 6R linkage, not necessarily mobile, such that
$l_{i,i+1,i+2}=8$ for $i=1,\dots,6$. For all such $i$, we define $w_i$ as the arithmetic mean value
of all roots of $Q_{h_i,h_{i+1},h_{i+2},h_{i+3}}$ and $Q_{h_{i+3},h_{i+4},h_{i+5},h_{i}}$.
Note that $w_i=w_{i+3}$ for all $i$.
We define the {\em invariant quad polynomials} $Q_1^+,\dots,Q_6^+\in\C[x]$ by
\[ Q_i^+(x) := Q_{h_i,h_{i+1},h_{i+2},h_{i+3}}(x+w_i) . \] 
The polynomials $Q_1^-,\dots,Q_6^-\in\C[x]$ are defined as $Q_1^+,\dots,Q_6^+$ for the
linkage $L':=(-h_1,h_2,-h_3,h_4,-h_5,h_6)$.

Theorem~\ref{thm:quad} holds also for the invariant quad polynomials. But these polynomials  have the
additional advantage that they can be expressed in the invariant parameters.
Here is the precise statement.

\begin{thm} \label{thm:iquad}
Let $k\in\{1,\dots,6\}$.
The number of bonds connecting $h_k$ and $h_{k+3}$ with $t_k=+\ci$ and $t_{k+3}=+\ci$, 
counted with multiplicity,
is less than or equal to the degree of the greatest common divisor of $Q_k^+$ and $Q_{k+3}^+$.
Similarily, the number of bonds connecting $h_k$ and $h_{k+3}$ with $t_k=+\ci$ and $t_{k+3}=-\ci$, 
counted with multiplicity, 
is less than or equal to the degree of the greatest common divisor of $Q_k^-$ and $Q_{k+3}^-$.

The invariant quad polynomials are invariant under motion. More precisely, we have the formula
\[ Q_k^+(x) = \left(x+\frac{f_{k}-f_{k+2}-f_{k+3}+f_{k+5}}{2}+\ci \frac{s_{k+3}-s_k}{4}\right)^2 + \]
\[ \frac{c_{k+1}}{2}(b_kb_{k+2}-s_{k+1}s_{k+2})
	+\frac{\ci}{2}(s_{k+1}(b_k-b_{k+2}c_{k+1})+s_{k+2}(b_{k+2}-b_kc_{k+1})) , \]
where $f_k=b_kc_k$ for $k=1,\dots,6$.
The invariant quad polynomials $Q_k^-$ can be obtained by substituting $b_1,\dots,b_6,c_1,\dots,c_6$
by their negatives and $s_k$ by $(-1)^ks_k$ for $k=1,\dots,6$.
\end{thm}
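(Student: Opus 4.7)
The plan is to split the theorem into three independent claims. The bond-count inequality is immediate from Theorem~\ref{thm:quad}: by construction $Q_k^+(x) = Q_{h_k,h_{k+1},h_{k+2},h_{k+3}}(x+w_k)$ and, because $w_k = w_{k+3}$, also $Q_{k+3}^+(x) = Q_{h_{k+3},h_{k+4},h_{k+5},h_k}(x+w_k)$, and a common translation of the variable preserves the degree of the greatest common divisor. The $t_{k+3}=-\ci$ case is the same argument applied after replacing $h_{k+3}$ by $-h_{k+3}$.

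Motion invariance of $Q_k^+$ follows from Lemma~\ref{lem:shift}: under a configuration motion, both $Q_{h_k,\ldots,h_{k+3}}$ and $Q_{h_{k+3},\ldots,h_k}$ undergo the \emph{same} shift $w$, so the mean $w_k$ of their four combined roots also shifts by $-w$, and the recentering in the definition of $Q_k^+$ exactly cancels the motion. The sign-flip description of $Q_k^-$ comes from the parameter-transformation rule recorded just before the theorem: replacing $h_1,h_3,h_5$ by their negatives multiplies every $c_i$ and $b_i$ by $-1$ and sends each $s_i$ to $(-1)^i s_i$, so the formula for $Q_k^-$ is obtained from that of $Q_k^+$ by these substitutions.

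The substantive part is the explicit formula. The plan is to fix a Denavit--Hartenberg frame that places $h_k$ along the $z$-axis and the common normal of $h_k,h_{k+1}$ along the $x$-axis, giving closed dual quaternion expressions for $h_k,\ldots,h_{k+3}$ in the DH parameters local to that chain. Substituting into the defining ideal and eliminating $y,t_{k+1},t_{k+2}$ then yields the raw quadratic $Q_{h_k,\ldots,h_{k+3}}(x)$; an analogous computation for the complementary chain produces $Q_{h_{k+3},\ldots,h_k}(x)$, and averaging the four roots gives $w_k$. Shifting by $w_k$ reorganizes the polynomial into the symmetric centered form stated. The main obstacle is the sheer bulk of this elimination and the cancellations that turn the raw quadratic into the clean shape $(x+\alpha)^2+\beta$ with $\alpha,\beta$ exactly as written; verifying that the leading coefficient is already $1$ (so that no further normalization is needed under the nonparallel-adjacent assumption of Remark~\ref{rem:linquad}) is the most error-prone sub-step, and computer algebra is the natural tool for executing it.
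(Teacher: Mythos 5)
Your proposal is correct and follows essentially the same route as the paper: the bond-count inequality from Theorem~\ref{thm:quad} together with the fact that a common shift $w_k=w_{k+3}$ preserves the gcd degree, invariance from Lemma~\ref{lem:shift} plus the zero-mean normalization pinning down the shift, and the explicit formula delegated to a computer algebra computation justified by the completeness of the Denavit--Hartenberg invariants. The paper's own proof is exactly this outline (with Maple doing the elimination), so there is nothing to add.
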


\begin{proof}
By Theorem~\ref{thm:quad}, the number of bonds connecting $h_1$ and $h_4$ is less than or equal 
to the degree of the greatest common divisor of $Q_{h_1,h_2,h_3,h_4}$ and $Q_{h_4,h_5,h_6,h_1}$.
The invariant quad polynomials $Q_1^+$ and $Q_4^+$ are their shifts by $w_1=w_4$, hence their
greatest common divisor has the same degree. This implies the first assertion. The assertion
on $Q_1^-$ can be reduced to the first one.

By Lemma~\ref{lem:shift}, it is clear that the polynomials $Q_1^+$ and $Q_4^+$ are invariant up
to shift of the variable. In addition, the arithmetic mean of all their roots is $0$ by construction. 
This leaves no choice for the shift, which proves invariance.

Since the Denavit--Hartenberg parameters $b_k,c_k,s_k$, $k=1,\dots,6$, form a complete system of invariants, 
it follows that the coefficients of the invariant quad polynomials can be expressed as rational functions
in them. The actual formula has been calculated using the computer algebra system Maple, taking
advantage of the invariance.
\end{proof}

Using Theorem~\ref{thm:iquad}, we can formulate a necessary condition for the existence of a bond connecting $h_1$
and $h_4$: either the resultant of $Q_1^+$ and $Q_4^+$ or the resultant of $Q_1^-$ and $Q_4^-$ has to
vanish. Both resultant can be expressed as polynomials in $b_1,\dots,s_6$, but this polynomial turns
out to be relatively complicated. Fortunately one obtains an easier system of equations when the
maximal number of bonds is assumed.

\section{Linkages with Maximal Genus} \label{sec:max}

In this section we give a classification of all closed 6R linkages with a configuration curve of genus at least 4
that do not have links with parallel joint axes, in terms of their Denavit--Hartenberg parameters.
It turns out there are are 4 irreducible families; two of them are well-known, the other two are new.

As in the previous section, we use the angle cosines $c_1,\dots,c_6$, the Bennett ratios $b_1,\dots,b_6$
and the offsets $s_1,\dots,s_6$. In addition, we also use the f-values $f_k=c_kb_k$, $k=1,\dots,6$;
this leads to shorter formulas.

Let $L$ be a linkage such that no adjacent axes are parallel, and assume that the genus of its configuration curve
at least four. By Theorem~\ref{thm:45}, its bond diagram is Figure~\ref{fig:hookediet}(a), (b), or (d).
Cases (a) and (b) are well-known and have been described in the Lemmas~\ref{lem:4} and \ref{lem:6}:
these are the Hooke linkage and the Dietmaier linkage, respectively.

\begin{rem} \label{rem:hookediet}
Just for the sake of completeness, here is the description in terms of the Denavit--Hartenberg parameters
(see \cite{Dietmaier}).
\begin{description}
\item[Hooke linkage:] $b_1=b_2=b_4=b_5=s_2=s_5=0$, 
	$d_3^2+s_3^2+s_4^2-2c_3s_2s_4=d_6^2+s_2^2+s_5^2-2c_6s_1s_5$.
\item[Dietmaier linkage:] $b_1=b_2,b_4=b_5,b_3=b_6,c_3=c_6,f_1+f_2=f_4+f_5,s_1=s_3,s_4=s_6,s_2=s_5=0$
	up to orientation of the axes.
\end{description}
\end{rem}

From now on, we assume that $l_{k,k+1,k+2}=8$ for $k=1,\dots,6$; consequently, the bond diagram
is Figure~\ref{fig:hookediet}(d). The number of bonds is maximal, for $k=1,2,3$, and for any choice
of $t_k,t_{k+3}$ in $\{+\ci,-\ci\}$, there exist 2 bonds connecting $h_k$ and $h_{k+3}$.
By Theorem~\ref{thm:iquad}, we get the following equalities of polynomials in $\C[x]$:
\begin{equation} \label{eq:444}
  Q_1^+=Q_4^+, Q_2^+=Q_5^+, Q_3^+=Q_6^+, Q_1^-=Q_4^-, Q_2^-=Q_5^-, Q_3^-=Q_6^- . 
\end{equation}
Each equality of polynomials gives rise to 4 scalar equations, namely the real and imaginary part
of the linear and the constant coefficient. 

\begin{lem} \label{lem:sol}
The zero set of the 24 equations above is the union of two irreducible components. For both,
we have $s_1=\cdots=s_6=0$ and the three equations
\begin{equation} \label{eq:red}
 b_1c_2b_3=b_4c_5b_6, b_2c_3b_4=b_5c_6b_1, b_3c_4b_5=b_6c_1b_2 . 
\end{equation}
The two components are
\begin{enumerate}
\item $f_1=f_4, f_2=f_5, f_3=f_6, b_1b_3b_5=b_2b_4b_6$, \\ $b_1^2+b_3^2+b_5^2=b_2^2+b_4^2+b_6^2$
\item $f_1=f_3=f_5, f_2=f_4=f_6, b_1b_3b_5f_2=b_2b_4b_6f_1$, \\ $b_1^2+b_3^2+b_5^2+f_2^2=b_2^2+b_4^2+b_6^2+f_1^2$.
\end{enumerate}
If no Bennett ratio is zero, then the three equations (\ref{eq:red}) are redundant.
\end{lem}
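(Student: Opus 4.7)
The plan is to expand each of the six polynomial equalities in (\ref{eq:444}) via the formula for $Q_k^\pm$ from Theorem~\ref{thm:iquad}, split each into four scalar conditions (real/imaginary parts of the linear and constant coefficients), and simplify the resulting system in the eighteen parameters. The linear coefficient of $Q_{k+3}^+$ is the negative of that of $Q_k^+$, so equating them forces $B_k:=f_k-f_{k+2}-f_{k+3}+f_{k+5}=0$ and $s_{k+3}-s_k=0$; the $Q^-$ version yields the same $B_k=0$ together with $s_k+s_{k+3}=0$. Combined, all $s_i$ vanish. Once $s=0$ and $B_k=0$ are in force, the constant coefficients of $Q_k^\pm$ and $Q_{k+3}^\pm$ both collapse to $\pm\tfrac12 c_{k+1}b_kb_{k+2}=\pm\tfrac12 c_{k+4}b_{k+3}b_{k+5}$, which is precisely (\ref{eq:red}). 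So, set-theoretically, the 24-equation zero set coincides with the variety cut out by $\{s=0,\,B_1=B_2=0,\,(\ref{eq:red})\}$.

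The next step is an algebraic case-split. Writing $g_i:=f_i-f_{i+3}$, the relations $B_k=0$ force $g_1=g_3=g_5=t$ and $g_2=g_4=g_6=-t$ for a single scalar $t$. Setting $P:=b_1b_3b_5$, $R:=b_2b_4b_6$ and substituting $c_i=f_i/b_i$ into (\ref{eq:red}) (legal when no Bennett ratio vanishes), the three equations become $f_1(P-R)=tP$, $f_2(P-R)=tR$, $f_3(P-R)=tP$. Two possibilities arise: either $P=R$, whence $t=0$ (using $P\neq0$) and $f_i=f_{i+3}$ (this is Case~1); or $P\neq R$, in which case one reads off $f_1=f_3=tP/(P-R)$ and $f_2=tR/(P-R)$, and substituting back gives $f_5=f_1$, $f_4=f_6=f_2$ and the single multiplicative identity $f_2P=f_1R$ (this is Case~2). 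A vanishing Bennett ratio is handled by a parallel argument that also lands in one of the two components, which explains the redundancy statement at the end of the lemma.

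The main obstacle is extracting the quadratic identity $b_1^2+b_3^2+b_5^2=b_2^2+b_4^2+b_6^2$ (resp. its $f$-augmented analogue in Case~2), and confirming primality of the two components. These relations are not visible among the generators listed above, so the strategy is to eliminate the $c_i$ by substituting $c_i=f_i/b_i$ into (\ref{eq:red}), combining with $B_k=0$, and then clearing denominators to obtain the vanishing ideal in $\R[b_1,\dots,b_6,f_1,\dots,f_6]$. The resulting ideal should split into two prime ideals whose generators include the missing sum-of-squares relations. In practice this elimination is most efficient in a computer algebra system, which also verifies the primality (hence the irreducibility) of each component; the final sentence of the lemma then follows immediately, since in the absence of vanishing Bennett ratios the multiplicative equations (\ref{eq:red}) are recovered from the solved expressions for the $c_i$.
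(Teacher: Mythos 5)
Your opening reduction is carried out correctly from the printed formula for $Q_k^\pm$ and matches (indeed elaborates) the first sentence of the paper's proof: the linear coefficient of $Q_{k+3}^\pm$ is the negative of that of $Q_k^\pm$, so the $Q^+$-equalities force $B_k=0$ and $s_{k+3}=s_k$, the $Q^-$-equalities force $s_{k+3}=-s_k$, hence $s_1=\cdots=s_6=0$, and the constant coefficients then reduce exactly to (\ref{eq:red}). The genuine gap is everything after that. You yourself observe that the quadratic relations $b_1^2+b_3^2+b_5^2=b_2^2+b_4^2+b_6^2$ (and its $f$-augmented analogue) ``are not visible among the generators,'' and you then assert that an elimination applied to those same generators ``should'' produce them. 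It cannot: your reduction says the zero set equals $V(s_1,\dots,s_6,\,B_1,B_2,\,(\ref{eq:red}))$, and that variety contains points lying on neither claimed component. For instance $s=0$, $(b_1,\dots,b_6)=(1,2,3,6,4,1)$, $(c_1,\dots,c_6)=\bigl(\tfrac{3}{5},\tfrac{2}{5},\tfrac{1}{10},\tfrac{1}{10},\tfrac{1}{5},\tfrac{3}{10}\bigr)$ satisfies $f_i=f_{i+3}$ for all $i$, $b_1b_3b_5=b_2b_4b_6=12$, and all three equations of (\ref{eq:red}), yet $b_1^2+b_3^2+b_5^2=26\ne 41=b_2^2+b_4^2+b_6^2$ and $f_1\ne f_3$, so it lies on neither component (1) nor component (2). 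So no Gr\"obner basis or primary decomposition of the system you wrote down can yield the lemma; either your hand simplification loses information relative to the quad polynomials actually fed to Maple in the paper, or further equations must enter, and in either case the decisive step of the proof is absent rather than deferred.

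Beyond that, your case split $P=R$ versus $P\ne R$ is a set-theoretic partition, not a decomposition into irreducible components, and the primality of the two families is again delegated to an unspecified computation. Since the paper's own proof consists precisely of the observation $s_1=\cdots=s_6=0$ followed by a Gr\"obner basis decomposition in Maple of the simplified system, your proposal does not replace the computational step with an argument; it reproduces the paper's first line, adds a partial hand analysis that (as the example above shows) cannot reach the stated conclusion, and then appeals to the same black box the paper uses --- but applied to a system from which the answer does not follow.
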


\begin{proof}
By comparing the imaginary parts of the linear coefficients, it follows immediately that 
$s_1=\dots=s_6=0$. 
For the simplified system, we obtained the decomposition above by Gr\"obner basis computation
using the computer algebra system Maple.
\end{proof}

\begin{thm}
There are two irreducible families of 6R linkages with coupling dimensions~8 such that the
configuration curve has genus 5 generically.
They are characterized by (1) and (2) in Lemma~\ref{lem:sol}.
\end{thm}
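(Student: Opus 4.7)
The plan is to combine the bond-theoretic results of Section~\ref{sec:bd} with the algebraic classification in Lemma~\ref{lem:sol}, then verify the genus for a generic member of each family.

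For the \emph{necessary} direction, I start with a closed 6R linkage $L$ with no adjacent parallel axes, $l_{k,k+1,k+2}=8$ for $k=1,\dots,6$, and configuration curve of genus~5. By Theorem~\ref{thm:45}, its bond diagram must be Figure~\ref{fig:hookediet}(a), (b), or (d); the first two have a coupling dimension smaller than 8, so the diagram must be (d). Corollary~\ref{rem:8} forces $b_1=b_2=b_3=4$ in the notation of Lemma~\ref{lem:8}, and Lemma~\ref{lem:bb} caps each signed count by 2. Hence every sign choice $(t_k,t_{k+3})\in\{\pm\ci\}^2$, for $k=1,2,3$, realizes exactly two bonds. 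By Theorem~\ref{thm:iquad}, each pair of monic quadratic invariant quad polynomials $Q_k^{\pm}$ and $Q_{k+3}^{\pm}$ must share a gcd of degree~2, forcing the six equalities~\eqref{eq:444}. Lemma~\ref{lem:sol} then places the Denavit--Hartenberg parameters in one of the two irreducible components (1) or~(2).

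For the \emph{sufficient} direction and \emph{irreducibility}, I need to show that each component does contain mobile linkages whose configuration curve attains genus~5 (the upper bound is given by Theorem~\ref{thm:bound}). Family (1) is characterized by $f_i=f_{i+3}$ together with the two Bennett-type identities; these are exactly the parameters of Bricard's orthogonal linkage, depicted in Figure~\ref{fig:hookediet}(d), whose mobility and genus-5 configuration curve are classical. For family (2), which is new, I would exhibit an explicit parameter choice (say, symmetric under the three-fold rotation $i\mapsto i+2$, which is built into $f_1=f_3=f_5$, $f_2=f_4=f_6$) and verify by direct computation that the closure equation~\eqref{eq:1} admits a one-dimensional solution set. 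Once a single mobile example is produced, the full family is obtained by motion on the irreducible component and by the complete-invariant property of the Denavit--Hartenberg parameters, establishing irreducibility and non-emptiness. The genus computation then proceeds from the bond count: in Case~1 of Lemma~\ref{lem:8}, the projection $(q_{1,4},q_{2,5})$ realizes $K$ as a bi-degree $(2,2)$ curve inside a product of two elliptic curves $C_{1,4}\times C_{2,5}$, and the arithmetic genus reaches the upper bound~5 by the Hodge-index argument in Lemma~\ref{lem:ag}; generically no singularities appear, so the geometric genus equals~5.

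The main obstacle will be the second family. Unlike family (1), it is not documented in the classical literature, so neither mobility nor the precise genus are free. Constructing a concrete example (e.g., choosing $c_i$, $b_i$ compatibly with the algebraic conditions and directly solving the closure equation) and then ruling out the possibility that the configuration curve drops in genus due to forced singularities is the delicate step. Finally, the two families are genuinely distinct: a generic point of~(1) has $f_1\neq f_3$ (since only $f_1=f_4$ is imposed), whereas every point of~(2) satisfies $f_1=f_3$; hence no reparameterization identifies them.
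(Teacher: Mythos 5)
Your necessity direction (bond diagram (d) via Theorem~\ref{thm:45}, the maximal bond counts from Corollary~\ref{rem:8} and Lemma~\ref{lem:bb}, the gcd conditions from Theorem~\ref{thm:iquad} forcing the equalities~\eqref{eq:444}, and then Lemma~\ref{lem:sol}) matches the paper's setup. The gap is in the sufficiency direction, and it is twofold. First, you never actually establish that a \emph{generic} member of either family is mobile. Producing one mobile example and then invoking ``motion on the irreducible component'' does not work: moving a linkage changes its configuration, not its Denavit--Hartenberg parameters, so it cannot sweep out the family; and mobility is not an open condition, so one mobile point does not propagate to nearby parameter values. The paper's key idea, which you are missing, is a Bézout-type argument: the equalities~\eqref{eq:444} imply (via Remark~\ref{rem:q2}) that $X_{1,2,3}$ and $X_{6,5,4}$ share at least $24$ points, while intersection theory allows only $16$ for a finite intersection; hence the intersection is infinite and \emph{every} linkage in either family moves. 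Second, your genus argument leans on ``generically no singularities appear,'' which is unsubstantiated. The paper instead uses lower semicontinuity of the genus together with the upper bound $5$ from Theorem~\ref{thm:bound}: it then suffices to exhibit a single genus-$5$ example in each family, and the explicit example $b_1=b_4=0$, $b_2=40$, $b_3=32$, $b_5=25$, $b_6=7$, $c_1=\dots=c_6=0$ lies in the intersection of the two families, so one computation serves both.

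A separate factual error: you identify family (1) with Bricard's orthogonal linkage and call it classical. According to the paper, Bricard's orthogonal linkage ($c_1=\dots=c_6=0$, so all $f_i=0$) is a special case of family (2), and \emph{both} families (1) and (2) are new; neither can be dispatched by citing classical literature. This matters because your plan treats family (1) as already known and defers all the work to family (2), whereas in fact the same (currently missing) mobility and genus arguments are needed for both.
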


\begin{proof}
The validity of the equations~(\ref{eq:444}) implies the existence of 24 points in the intersection
of $X_{1,2,3}$ and $X_{6,5,4}$, by Remark~\ref{rem:q2}. Intersection theory predicts an intersection of
only 16 points (see \cite[Section~11.5.1]{selig05}), therefore the intersection is infinite and the linkage moves.

Since the genus is a lower semicontinuous function in a family of curves, and 5 is the largest possible
value, it suffices to exhibit a single example with a configuration curve of genus 5 
for each of the two families in order to prove that 
the genus is 5 in the generic case. Here is an example that works for both, because it is
in the intersection of the two families:
\[ b_1=0, b_2=40, b_3=32, b_4=0, b_5=25, b_6=7, c_1=\dots=c_6=0. \]
\end{proof}


\begin{rem}
  A special case of the second family is Bricard's orthogonal linkage (see \cite{Baker80}). It can be
  characterized by the condition $s_1=\dots=s_6=c_1=\dots=c_6=0$ and
  $b_1^2+b_3^2+b_5^2=b_2^2+b_4^2+b_6^2$. The example in the proof of Theorem~\ref{thm:iquad} is
  actually an instance of Bricard's orthogonal linkage. Therefore we can conclude that the genus of
  the configuration curve of Bricard's orthogonal linkage is 5 generically.
\end{rem}

\begin{rem}
The linkages with a configuration curve of genus 4 are contained in the 4 families described in
this section as special cases. A concrete example is the Bricard orthogonal linkage with
$(b_1,\dots,b_6)=(4,3,5,7,9,8)$.
\end{rem}

\section*{Acknowledgements}

This research was supported by the Austrian Science Fund (FWF): DK~W~1214-N15.

\bibliographystyle{plain}
\bibliography{mega}

\begin{thebibliography}{10}

\bibitem{Baker80}
J.~E. Baker.
\newblock An analysis of {Bricard} linkages.
\newblock {\em Mech. Machine Theory}, 15(4):267--286, 1980.

\bibitem{Hooke}
J.~E. Baker.
\newblock Displacement-closure equations of the unspecialised
  double-{H}ooke's-joint linkage.
\newblock {\em Mech. Machine Theory}, 37:1127--1142, 2002.

\bibitem{bennett14}
G.~T. Bennett.
\newblock The skew isogramm-mechanism.
\newblock {\em Proc. London Math. Soc.}, 13(2nd Series):151--173, 1913--1914.

\bibitem{bottema90}
O.~Bottema and B.~Roth.
\newblock {\em Theoretical Kinematics}.
\newblock Dover Publications, 1990.

\bibitem{Dietmaier}
P.~Dietmaier.
\newblock {\em {Einfach \"ubergeschlossene Mechanismen mit Drehgelenken}}.
\newblock Habilitation thesis, Graz University of Technology, 1995.

\bibitem{Goldberg}
M.~Goldberg.
\newblock New five-bar and six-bar linkages in three dimensions.
\newblock {\em Trans. ASME}, 65:649--656, 1943.

\bibitem{hegedus13a}
G.~Heged\"us, J.~Schicho, and H.-P. Schr\"ocker.
\newblock Factorization of rational curves in the {Study} quadric and revolute
  linkages.
\newblock {\em Mech. Machine Theory}, 69(1):142--152, 2013.

\bibitem{hegedus13b}
G.~Heged{\"u}s, J.~Schicho, and H.-P. Schr{\"o}cker.
\newblock The theory of bonds: A new method for the analysis of linkages.
\newblock {\em Mech. Machine Theory}, 70:407--424, 2013.

\bibitem{husty10}
M.~Husty and H.-P. Schr\"ocker.
\newblock Algebraic geometry and kinematics.
\newblock In I.~Z. Emiris, F.~Sottile, and Th. Theobald, editors, {\em
  Nonlinear Computational Geometry}, volume 151 of {\em The IMA Volumes in
  Mathematics and its Applications}, chapter Algebraic Geometry and Kinematics,
  pages 85--107. Springer, 2010.

\bibitem{Karger}
A.~Karger.
\newblock Classification of {5R} closed kinematic chains with self mobility.
\newblock {\em Mech. Machine Theory}, pages 213--222, 1998.

\bibitem{mueller62}
H.~R. M{\"u}ller.
\newblock {\em {Sph{\"a}rische Kinematik}}.
\newblock VEB Deutscher Verlag der Wissenschaften, Berlin, 1962.

\bibitem{nawratil}
G.~Nawratil.
\newblock Introducing the theory of bonds for {Stewart-Gough} platforms with
  self-motions.
\newblock Talk at CGTA, Ljubljana, 2013.

\bibitem{sarrus53}
M.~Sarrus.
\newblock Note sur la transformation des mouvements rectilignes alternatifs, en
  mouvements circulaires; et réciproquement.
\newblock {\em C. R. Akad. des Sciences}, page 1038, 1853.

\bibitem{selig05}
J.~Selig.
\newblock {\em Geometric Fundamentals of Robotics}.
\newblock Monographs in Computer Science. Springer, 2 edition, 2005.

\bibitem{Wohlhart}
K.~Wohlhart.
\newblock A new {6R} space mechanism.
\newblock In {\em Proc. IFToMM 7}, pages 193--198, 1987.

\end{thebibliography}

\end{document}